
\documentclass[runningheads]{llncs}

\usepackage{amsmath, amsfonts, amssymb, graphicx, xcolor, dsfont, enumerate, dcolumn,subcaption}
\usepackage{arydshln}
\usepackage[numbers,square, comma, compress]{natbib}
\usepackage{algorithm}
\usepackage{algorithmic}
\usepackage{makecell}
\usepackage{ifthen}
\usepackage{threeparttable}

\let\doendproof\endproof
\renewcommand\endproof{\hfill\qed\doendproof}

\usepackage{hyperref}
\usepackage{xurl}
\usepackage[capitalise]{cleveref}
\PassOptionsToPackage{hyphens}{url}\usepackage{hyperref}

\usepackage{tikz}

\usepackage{datetime}


%

\newtheorem{observation}{Observation}

\begin{document}

\title{On graphs isomorphic with their conduction graph}

\titlerunning{On graphs isomorphic with their conduction graph}
%

\author{Aidan Birkinshaw\inst{1}, Patrick W. Fowler\inst{1}, Jan Goedgebeur\inst{2,3} \and
Jorik Jooken\inst{2}\thanks{Corresponding author}}

\authorrunning{Birkinshaw et al.}

\institute{
Department of Chemistry, University of Sheffield, Sheffield S3 7HF, UK\\
\email{a.birkinshaw5@gmail.com,p.w.fowler@sheffield.ac.uk}
\and
Department of Computer Science, KU Leuven Kulak, 8500 Kortrijk, Belgium\\
\email{jan.goedgebeur@kuleuven.be,jorik.jooken@kuleuven.be}
\and
Department of Applied Mathematics, Computer Science and Statistics, Ghent University, 9000 Ghent, Belgium\\
}

\maketitle

\begin{abstract}
Conduction graphs are defined here in order
to elucidate at a glance the often complicated 
conduction behaviour of molecular graphs as 
ballistic molecular conductors. The graph $G\sp{\mathrm C}$ 
describes all possible conducting devices associated with a given base graph $G$ within the context of the Source-and-Sink-Potential model of ballistic conduction. The graphs $G\sp{\mathrm C}$ and $G$ have the same vertex set, and each edge $xy$ in $G\sp{\mathrm C}$ represents a conducting device with graph $G$ and connections $x$ and $y$ that conducts at the Fermi level. If $G\sp{\mathrm C}$ is isomorphic with the simple graph $G$ (in which case we call $G$ conduction-isomorphic), then $G$ has nullity $\eta(G)=0$ and is an ipso omni-insulator. Motivated by this, examples are provided of ipso omni-insulators of odd order, thereby answering a recent
question. 
For $\eta(G)=0$, $G\sp{\mathrm C}$ is obtained by {\lq booleanising\rq} the inverse adjacency matrix $A^{-1}(G)$,
to form $A(G\sp{\mathrm C})$, i.e.\ by replacing all non-zero entries
$(A(G)\sp{-1})_{xy}$ in
the inverse by $1+\delta_{xy}$ where $\delta_{xy}$ is the Kronecker delta function. Constructions 
of conduction-isomorphic graphs are given for the cases of $G$ with minimum degree equal to two or any odd integer. Moreover, it is shown that given any connected non-bipartite conduction-isomorphic graph $G$, a larger conduction-isomorphic graph $G'$ with twice as many vertices and edges can be constructed. It is also
shown that there are no 3-regular conduction-isomorphic graphs. 
A census of small (order $\leq 11$) connected conduction-isomorphic graphs and small (order $\leq 22$) connected conduction-isomorphic graphs with maximum degree at most three is given. For $\eta(G)=1$, it is shown that $G\sp{\mathrm C}$ is connected if and only if $G$ is a nut graph (a singular graph of nullity one that has a full kernel vector).

\end{abstract}


\section{Introduction}
The ballistic conduction of electrons through a molecular $\pi$ system that is connected by leads into a circuit can be modelled by the Source-and-Sink-Potential (SSP) approach~\cite{E11,GEZ07}, which itself can be pared down to a purely graph-theoretical form~\cite{PF08}.
\subsection{The graph theoretical model of molecular conduction}
The critical quantity is $T(E)$, the fractional transmission of electrons with energy $E$ through molecular graph $G$ connected via vertex $l$ to a source $L$ representing the incoming left lead, and via vertex $r$ to a sink $R$ representing the outgoing lead.
The triple $(G, l, r)$ defines a {\it device}. We are usually most interested in transmission of a device at energy $E = 0$, the Fermi or non-bonding level.
In the graph-theoretical version of the SSP, $T(0)$ is a function of four characteristic polynomials $s$, $t$, $u$, $v$ which are respectively $\varphi(G)$, $\varphi(G-l)$, $\varphi(G-r)$, $\varphi(G-l-r)$~\cite{PF08}. For a symmetric device, i.e.\ one where both leads are of the same material, $T(0)$ is given by evaluation in the limit $E \rightarrow 0$  of the function

$$ T = \frac{4(ut-sv) \tilde{\beta}^2}{(s-v \tilde{\beta}^2)^2 + (t + u)^2 \tilde{\beta}^2},$$
where the numerator polynomial $(ut-sv)$ is a square $j^2$ (by Jacobi’s Theorem~\cite{S51}) and $\tilde{\beta}^2$ is a parameter describing relative strengths of interactions within the molecule, between molecule and leads, and within the leads~\cite{FPTBS14}. 

Devices may be {\it distinct} ($l \neq r$) or \textit{ipso} ($l = r$), and selection rules based on Cauchy’s Interlacing Theorem have been developed to predict whether any given device conducts ($T(0) \neq 0$) or insulates ($T(0) = 0$) at the Fermi level~\cite{FPTM09}. These are based on the {\it nullity signature} of the device, $(\eta(G), \eta(G-l), \eta(G-r), \eta(G-l-r))$ where $\eta(H)$ is the multiplicity of the eigenvalue zero in the spectrum of graph $H$. This signature suffices to distinguish conduction and insulation in all but one case. The sole
exception is when $\eta(G) = \eta(G-l) = \eta(G-r) = \eta(G-l-r)$, and it is then necessary to consider also the number of zero roots of $j\sp2$ to check for ‘accidental’ zeroes arising from the vanishing of $j\sp2 = (ut-sv)/E\sp{\eta(G)}$.

Consideration of the selection rules allows a definition of {\it classes} of conductors based on cases where all devices in a given class (e.g.\ distinct, ipso or both (= strong)) have some uniform behaviour.  This gives rise to the notions of omni-conductors and omni-insulators, and their near-omni and $d$-omni specialisations~\cite{FBPS20,FPTBS14,FSBSP17}.  Here, we explore a different approach that gives a global representation of {\it all possible} devices derived from a given base graph $G$.  This is achieved by defining a
conduction graph, $G\sp{\mathrm C}$, to be associated with the base graph $G$.

\subsection{The conduction graph}
The set of devices $(G, l, r)$ defines a {\it conduction matrix}, $C(G)$, indexed by device connection vertices, and
containing as entry on row $l$ and column $r$ the value of $T(0)$ for the specific device. This matrix can be used to construct a graph, as follows.

First, for any matrix $A \in \mathbb{R}^{n \times m}$, we define $booleanise(A)$ as the matrix obtained by replacing all off-diagonal non-zero entries of $A$ by 1 and all diagonal non-zero entries of $A$ by 2.

From the conduction matrix for the devices derived from $G$, a new matrix is constructed by assigning the element in row $l$ and column $r$ of a new matrix as $1 + \delta_{lr}$ (where $\delta_{ij}$ is the Kronecker delta function) if and only if 
the device $(G, l, r)$ conducts. By the above definition, this 
new matrix is just $booleanise(C(G))$. The new matrix is then interpreted as the adjacency matrix\footnote{We adopt the convention that a loop is represented by the entry 2 in the adjacency matrix.} of a graph $G\sp{\mathrm C}$, i.e.
$$A(G\sp{\mathrm C}) = booleanise(C(G)).$$  
\par\noindent
In plain language, each conducting distinct device $(G, l, r)$ leads to an edge in $G\sp{\mathrm C}$, and each conducting ipso device leads to a loop in $G\sp{\mathrm C}$.

The definition of the graph
$G\sp{\mathrm C}$ is intended to embody a transparent summary of the varied conduction behaviour of the underlying graph $G$.  An appealing feature of this description is that, since $G\sp{\mathrm C}$ is itself a graph, it is amenable to the standard techniques of graph theory.  It also suggests natural questions about the relationship of $G\sp{\mathrm C}$ to $G$, and about
the extent to which $G\sp{\mathrm C}$ inherits or surpasses properties of $G$.

\subsection{Conduction-isomorphic graphs}
 Two graphs $G=(V,E)$ and $G'=(V',E')$ are \textit{isomorphic} if there exists a bijection $h : V(G) \rightarrow V(G')$ between the vertex sets of $G$ and $G'$ such that any two vertices $u_1, u_2 \in V(G)$ are adjacent in $G$ if and only if $h(u_1)$ and $h(u_2)$ are adjacent in $G'$. We call a graph $G$ \textit{conduction-isomorphic} if $G$ is isomorphic with its conduction graph $G^\text{C}$. This notion is the central topic of this paper. 
 
 Conduction-isomorphic graphs are interesting, because for some applications it is necessary to reason about properties of the conduction graph of a graph $G$. In general, the conduction graph of $G$ is a non-trivial function of $G$, which can make this difficult, but for conduction-isomorphic graphs the task is easier, as the conduction graph of $G$ inherits many properties from $G$.

 \section{Outline}
The remainder of this paper is structured as follows: in Section~\ref{sec:prelim} we discuss preliminaries about the graph-theoretic model of conduction and introduce definitions that will be used throughout the paper. In Section~\ref{sec:ipsoOmniInsulators} we show how conduction-isomorphic graphs are related to ipso omni-insulators and answer a question of Fowler et al.~\cite{FPTBS14} by showing the existence of ipso omni-insulators of odd order. Next, in Section~\ref{sec:infFam} we demonstrate the existence of several infinite families of conduction-isomorphic graphs. We complement these results in Section~\ref{sec:condIsoFromCondIso} by showing that for any given connected non-bipartite conduction-isomorphic graph $G$ it is possible to construct a larger conduction-isomorphic graph $G'$ with twice as many vertices and edges. On the other hand, we show in Section~\ref{sec:no3Regular} that no simple graphs in which every vertex has degree 3 (e.g.\ fullerenes form an important class of such graphs) can be conduction-isomorphic. In Section~\ref{sec:computationalResults} we discuss extensive computations that allow us to compile an exhaustive list of connected conduction-isomorphic graphs with at most 11 vertices and connected conduction-isomorphic graphs with maximum degree at most 3 and at most 22 vertices. Finally, in Section~\ref{sec:conclusion} progress is reviewed and some directions 
for further exploration are highlighted.

\section{Preliminaries and notation}
\label{sec:prelim}
The \textit{spectrum} $\sigma(G)=\{\lambda_1,\lambda_2,...,\lambda_n\}$ of a graph $G$ is the multiset of the eigenvalues of its adjacency matrix, $A(G)$, and the corresponding eigenvectors are the set of column vectors $\{x_1, x_2, ..., x_n\}$. An eigenvector corresponding to eigenvalue 0 is called a \textit{kernel vector}. We denote the element on row $u$ of column vector $x_i$ by $x_{i,u}$. The \textit{nullity} of the graph $G$ is equal to the multiplicity of the eigenvalue 0 in its spectrum. As noted earlier, the nullity of a graph $G$ plays an essential role in determining whether a device 
$(G, x, y)$ 
formed by connection of leads to vertices $x$ and $y$ of $G$
conducts or insulates at zero energy of the incoming electron.

We recall Cauchy's interlacing theorem~\cite{C93,H04}:
\begin{theorem}[\cite{C93,H04}]
    Let $G$ be a graph on $n$ vertices with eigenvalues $\lambda_1 \geq \lambda_2 \geq ... \geq \lambda_n$ and let $G\sp\prime$ be a graph with eigenvalues $\mu_1 \geq \mu_2 \geq ... \geq \mu_{n-1}$ on $n-1$ vertices obtained by removing a vertex from $G$. The spectra of $G$ and $G\sp\prime$ interlace, i.e.\ we have:
    $$\lambda_1 \geq \mu_1 \geq \lambda_2 \geq \mu_2 \geq ... \geq \mu_{n-1} \geq \lambda_n.$$
\end{theorem}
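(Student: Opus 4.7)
The plan is to give the standard proof via the Courant--Fischer min--max characterisation of eigenvalues of a real symmetric matrix. Since $G'$ is obtained from $G$ by deleting a vertex $v$, the adjacency matrix $A(G')$ is precisely the principal submatrix of $A(G)$ obtained by striking out the row and column indexed by $v$. The whole argument then reduces to a purely linear-algebraic interlacing fact about principal submatrices, and one can forget the graph-theoretic packaging entirely.

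First I would recall that for a real symmetric $n \times n$ matrix $A$ with eigenvalues $\lambda_1 \geq \cdots \geq \lambda_n$, one has
\[
\lambda_k \;=\; \max_{\substack{S \leq \mathbb{R}^n \\ \dim S = k}} \; \min_{\substack{x \in S \\ x \neq 0}} \frac{x^\top A x}{x^\top x}
\;=\; \min_{\substack{S \leq \mathbb{R}^n \\ \dim S = n-k+1}} \; \max_{\substack{x \in S \\ x \neq 0}} \frac{x^\top A x}{x^\top x}.
\]
For the upper bound $\lambda_k \geq \mu_k$: any $k$-dimensional subspace $S$ of $\mathbb{R}^{n-1}$ can be lifted to a $k$-dimensional subspace $\widetilde S \subseteq \mathbb{R}^n$ by inserting a zero entry in the coordinate corresponding to the deleted vertex $v$. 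For every $\widetilde x \in \widetilde S$, the Rayleigh quotient $\widetilde x^\top A(G) \widetilde x / \widetilde x^\top \widetilde x$ equals the corresponding quotient for the original $x$ under $A(G')$, because the $v$-th row and column of $A(G)$ contribute nothing. Taking $S$ to be the maximiser for $\mu_k$ and applying the max over all $k$-dimensional subspaces on the $\mathbb{R}^n$ side gives $\lambda_k \geq \mu_k$.

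For the lower bound $\mu_k \geq \lambda_{k+1}$: choose a $(k+1)$-dimensional subspace $T \leq \mathbb{R}^n$ that achieves the max in the characterisation of $\lambda_{k+1}$. Intersecting $T$ with the hyperplane $H = \{x \in \mathbb{R}^n : x_v = 0\}$ yields a subspace of dimension at least $k$, and every vector in $T \cap H$ has a Rayleigh quotient with respect to $A(G)$ of at least $\lambda_{k+1}$. Identifying $H$ canonically with $\mathbb{R}^{n-1}$ sends these vectors to vectors whose Rayleigh quotient under $A(G')$ is the same, producing a $k$-dimensional subspace of $\mathbb{R}^{n-1}$ on which the Rayleigh quotient of $A(G')$ is uniformly $\geq \lambda_{k+1}$. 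The max-over-subspaces characterisation of $\mu_k$ then yields $\mu_k \geq \lambda_{k+1}$.

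The only genuinely delicate step is the dimension count $\dim(T \cap H) \geq k$ in the second half; this follows from $\dim T + \dim H - \dim(T+H) \geq (k+1) + (n-1) - n = k$. Everything else is bookkeeping, so I expect no real obstacle beyond being careful that the zero-padding truly preserves the Rayleigh quotient, which in turn is exactly the statement that $A(G')$ is a principal submatrix (and not merely some arbitrary submatrix) of $A(G)$.
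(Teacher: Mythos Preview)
Your proof is correct and is the standard Courant--Fischer argument for Cauchy interlacing. Note, however, that the paper does not actually prove this theorem: it is quoted as a classical result with citations to Cauchy and Hwang, and is used only as background to justify the corollary that vertex deletion changes the nullity by at most one. So there is no ``paper's own proof'' to compare against; your write-up simply supplies the omitted standard argument.
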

As a direct corollary of this theorem, we see that the nullity
can change by at most one on removal of a vertex:
\begin{corollary}
    Let $G$ be a graph on $n$ vertices with nullity $\eta$. The nullity of any graph $G'$ obtained by removing a vertex from $G$ is in the set $\{\eta-1,\eta,\eta+1\}$.
\end{corollary}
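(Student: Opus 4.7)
The plan is to derive both bounds directly from Cauchy's interlacing theorem, keeping track of where the zero eigenvalues of $G$ sit. Suppose the zero eigenvalues of $G$ occupy positions $k, k+1, \ldots, k+\eta-1$ in the non-increasing list, i.e.\ $\lambda_k = \lambda_{k+1} = \cdots = \lambda_{k+\eta-1} = 0$, with $\lambda_{k-1} > 0$ (if $k > 1$) and $\lambda_{k+\eta} < 0$ (if $k + \eta \leq n$). Let $\mu_1 \geq \cdots \geq \mu_{n-1}$ be the eigenvalues of $G'$.

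For the lower bound, I would observe that for each index $i$ with $k \leq i \leq k+\eta-2$, interlacing gives $0 = \lambda_i \geq \mu_i \geq \lambda_{i+1} = 0$, so $\mu_i = 0$. This immediately produces $\eta - 1$ zero eigenvalues of $G'$, so $\eta(G') \geq \eta - 1$ (the statement being vacuous when $\eta \leq 1$).

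For the upper bound, I would argue contrapositively about which indices can correspond to $\mu_j = 0$. If $\mu_j = 0$, then $\lambda_j \geq \mu_j = 0 \geq \lambda_{j+1}$; the first inequality forces $j \leq k + \eta - 1$ (otherwise $\lambda_j < 0$), and the second forces $j \geq k - 1$ (otherwise $\lambda_{j+1} > 0$). Hence $j$ lies in the set of $\eta + 1$ consecutive integers $\{k-1, k, \ldots, k+\eta-1\}$, proving $\eta(G') \leq \eta + 1$.

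Combining the two bounds gives $\eta(G') \in \{\eta-1, \eta, \eta+1\}$. There is essentially no hard step; the only care needed is in the edge cases $k = 1$ or $k + \eta - 1 = n$ (where one of the one-sided strict inequalities vanishes because the corresponding $\lambda$ does not exist) and in the degenerate case $\eta = 0$, where the lower bound becomes trivial and only the upper bound argument contributes.
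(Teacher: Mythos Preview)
Your argument is correct and is precisely the direct interlacing count the paper has in mind; the paper states the corollary without proof, simply noting that it follows immediately from Cauchy's interlacing theorem, and your write-up is exactly that straightforward derivation spelled out.
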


A vertex $u$ of a graph $G$ for which there is a kernel vector $x_i$ such that $x_{i,u} \neq 0$ is called a \textit{core vertex}. For such a vertex, we have $\eta(G-u)=\eta(G)-1$. All other vertices are called \textit{core-forbidden vertices}. Within the class of core-forbidden vertices, we also make a distinction between vertices whose removal leads to a graph $G'$ with nullity $\eta$ (\textit{middle vertices}) or $\eta+1$ (\textit{upper vertices}). Based on the nullities of the graphs, Fowler et al.~\cite{FPTBS14} obtained the \textit{selection rules} shown in Table~\ref{tab:selectionRules} to determine whether the device associated with vertices $u$ and $v$ of graph $G$ 
, i.e.\ the device $(G,u,v)$, conducts or not.

\begin{table}[htbp]
    \centering
    \begin{tabular}{|c| c | c | c | c || c |}
        \hline
        $u=v$ & Nullity of $G$ & Nullity of $G-u$ & Nullity of $G-v$ & Nullity of $G-u-v$ & \thead{Conduction?}\\
        \hline
        No & $\eta$ & $\eta+1$ & $\eta+1$ & $\eta+2$ & No\\
        No & $\eta$ & $\eta+1$ & $\eta+1$ & $\eta$ & Yes\\
        No & $\eta$ & $\eta+1$ & $\eta$ & $\eta+1$ & No\\
        No & $\eta$ & $\eta+1$ & $\eta$ & $\eta$ & Yes\\
        No & $\eta$ & $\eta+1$ & $\eta-1$ & $\eta$ & No\\
        No & $\eta$ & $\eta$ & $\eta$ & $\eta+1$ & Yes\\
        No & $\eta$ & $\eta$ & $\eta$ & $\eta$ & Maybe\\
        No & $\eta$ & $\eta$ & $\eta-1$ & $\eta-1$ & No\\
        No & $\eta$ & $\eta-1$ & $\eta-1$ & $\eta$ & Yes\\
        No & $\eta$ & $\eta-1$ & $\eta-1$ & $\eta-1$ & Yes\\
        No & $\eta$ & $\eta-1$ & $\eta-1$ & $\eta-2$ & No\\
        \hline
        Yes & $\eta$ & $\eta+1$ & $\eta+1$ & - & No\\
        Yes & $\eta$ & $\eta$ & $\eta$ & - & Yes\\
        Yes & $\eta$ & $\eta-1$ & $\eta-1$ & - & Yes\\
        \hline 
    \end{tabular}
    \vskip12pt
    \caption{The selection rules from~\cite{FPTBS14} that determine whether the device $(G,u,v)$ conducts or insulates at the Fermi level.}
    \label{tab:selectionRules}
\end{table}

We remark that the conduction behaviour of the device $(G,u,v)$ when all nullities of $G$, $G-u$, $G-v$ and $G-u-v$ are equal depends on the non-zero eigenvalues and the corresponding kernel vectors of $G$. More precisely, define 
\begin{equation*}
s_0 := 
\prod_{\substack{\lambda_i \in \sigma(G) \\ \lambda_i \neq 0}} -\lambda_i
\end{equation*}
and
\begin{equation*}
j_a := 
\sum_{\substack{\lambda_i \in \sigma(G) \\ \lambda_i \neq 0}} \frac{x_{i,u}x_{i,v}s_0}{-\lambda_i}.
\end{equation*}
In the case where the four nullities are equal, the device $(G,u,v)$ conducts if and only if $j_a \neq 0$~\cite{FPTBS14}.

To appreciate the conduction behaviours of all devices $(G,u,v)$ for all pairs of not necessarily distinct vertices $u,v$ of a graph $G$, we define the \textit{conduction graph $G\sp{\mathrm C}$ of $G$}. This graph has the same vertex set as $G$ and there is an edge between vertices $u$ and $v$ if and only if the device $(G,u,v)$ conducts. When we speak of a conduction graph $G\sp{\mathrm C}$ of a graph $G$, in this paper we will always assume that $G$ is connected and simple, i.e.\ does not contain parallel edges nor loops, whereas $G\sp{\mathrm C}$ might be disconnected or contain loops (but no parallel edges).

We now introduce a number of definitions that will be used throughout the paper. We call a square matrix $A$ \textit{orthogonal} if $A^T=A^{-1}$, we call it a \textit{permutation matrix} if every row and every column of $A$ contains precisely one non-zero entry equal to 1 and we call it a \textit{0-1 matrix} if every entry of $A$ is equal to 0 or 1. A \textit{canonical double cover} of a graph $G=(V,E)$ is a graph $G'=(V',E')$ with vertex set $V' = V \times \{0,1\}$ and edge set $E' = \{(u_1,j)(u_2,1-j)~|~u_1u_2 \in E, j \in \{0,1\}\}$. We call a simple graph $G$ \textit{$d$-regular} if every vertex in $G$ has degree $d$ (precisely $d$ neighbours).

\subsection{Conduction graphs and conduction classes}
\label{sec:gcandtla}

Various generic classes of device behaviour have been defined~\cite{FPTBS14}, and some of these correspond to special kinds of conduction graphs. The broadest classification is into omniconductors and omni-insulators of different types, which can be codified with two-letter acronyms, where the first letter describes the behaviour of the set of distinct devices and the second the ipso devices. The alphabet is $\{\texttt{C,I,X}\}$, denoting all conducting (\texttt{C}), all insulating (\texttt{I}), and mixed behaviour or nonexistence of the class (\texttt{X}). A number from $\{0,1,2\}$ is appended to indicate that $\eta(G)$ is 0, 1 or $\geq 2$, respectively. Of the 27 conceivable combinations, only 13 are realisable, with nullity playing a vital role in deciding whether a combination is allowed or not. For example, combinations \texttt{II} are impossible, combinations \texttt{CI} and \texttt{XI} are limited to nullity zero, and combinations with \texttt{I} as first letter can occur only for nullity 2 or more.

The codes have implications for the structure of $G\sp{\mathrm C}$. Clearly, a \texttt{C} in the first place implies that the vertices of $G\sp{\mathrm C}$ form a clique, an \texttt{I} in first place that they form an independent set. A \texttt{C} for second place implies that $G\sp{\mathrm C}$ has loops on all vertices, an \texttt{I} in second place implies that $G\sp{\mathrm C}$ is loopless. Hence, $G\sp{\mathrm C}$ is a complete graph decorated on all vertices with a loop for \texttt{CC0} and \texttt{CC1}; in the case of \texttt{CC1}, $G$ is a nut graph; for \texttt{CI0}, $G\sp{\mathrm C}$ is a complete graph. The impossible combination \texttt{II} would imply that $G\sp{\mathrm C}$ is the disjoint union of $n$ copies of $K_1$, a fact which was used implicitly in the proof that this 
{\lq strong omni-insulator\rq} combination is never realised.
The pure, i.e. {\texttt{X}}-free codes {\texttt{CC}}, {\texttt{CI}} and the unrealisable {\texttt{II}} imply adjacency matrices $A({G\sp{\mathrm C}})$
equal to $J + I$, $J - I$ and $\mathbf{0}$ , where $J$, $I$ and $\mathbf{0} $ are the all-one matrix, identity matrix and all-zeros matrix of appropriate order. If we want to stress the order $n$ when it is not clear from the context, we will write $J_n$, $I_n$ and $\mathbf{0}_n$.

A more detailed classification uses a three-letter acronym (TLA) and in its general form, the first letter describes behaviour of distinct devices where the connection vertices are separated by odd distance, the second letter describes the distinct devices with even distance between connections and the third letter applies to the set of ipso devices. Now there are 81 conceivable cases, of which 42 are impossible, 35 realisable and 4 undecided~\cite{FBPS20}, again with implications for the possible structure of $G\sp{\mathrm C}$.

For bipartite graphs, the TLA has a simpler description in that 
the three letters of the code refer to 
{\lq inter\rq}, 
{\lq intra\rq} and 
{\lq ipso\rq} devices~\cite{FSBSP17},
where connections are respectively, distinct and in different partite sets, 
distinct and in the same partite set, or not distinct.
In the bipartite case, only 14 combinations of letters and nullity are realisable~\cite{FSBSP17}. Four combinations are realisable by just one graph: $(K_2,K_1,K_{2,1},K_{3,3})$ for \texttt{CXI0}, \texttt{XXC1}, \texttt{ICX1}, \texttt{ICC2} with conduction graphs $K_2, K_1^{\mathrm{loop}}, K_1+K_2^{\mathrm{loop}}, 6K_1^{\text{loop}}$.
Here $G^{\mathrm{loop}}$ represents the graph $G$ where a loop is added to each vertex, and we have used 
$G+H$ to denote the disjoint union of two graphs $G$ and $H$, and $kG$ to denote the disjoint union of $k$ copies of $G$. 
Again {\texttt{X}}-free codes lead to simple forms for $A({G\sp{\mathrm C}})$. 
If vertices are ordered so that $A(G)$ is in block-diagonal form 
\[
\renewcommand{\arraystretch}{1.5}
 \left[
\begin{array}{c|c}
\mathbf{0} & B \\ \hline
B^T & \mathbf{0} \\
\end{array}
\right],
\]
then 
{\texttt{CII}}, {\texttt{ICC}} and {\texttt{IIC}} imply adjacency matrices
\[
\renewcommand{\arraystretch}{1.5}
 \left[
\begin{array}{c|c}
\mathbf{0} & J \\ \hline
J & \mathbf{0} \\
\end{array}
\right],
\]
\[
\renewcommand{\arraystretch}{1.5}
 \left[
\begin{array}{c|c}
J+I & \mathbf{0} \\ \hline
\mathbf{0} & J+I \\
\end{array}
\right],
\]
and
\[
\renewcommand{\arraystretch}{1.5}
 \left[
\begin{array}{c|c}
I & \mathbf{0} \\ \hline
\mathbf{0} & I\\
\end{array}
\right],
\]
respectively for $A(G^\textrm{C})$. TLA codes for various families of graphs of interest in chemistry are tabulated in~\cite{FSBSP17} (Table VI).

Catafused benzenoids are Kekulean and hence of even order and non-singular, with code \texttt{CII0}~\cite{FSBSP17}, so that for them $G\sp{\mathrm C}$ is loopless and contains edges for all odd distances (only) and hence is the complete bipartite graph $K_{n/2,n/2}$. Isomeric catafused benzenoids exist for hexagon counts $h>2$ and sets for fixed $h$ share the same conduction graph. Although the frameworks of isomeric
benzenoid base graphs are non-isomorphic, every bipartite graph with partite sets of equal size is a subgraph of $K_{n/2,n/2}$, and in particular $G$ for each isomer is a subgraph of the common $G\sp{\mathrm C}$.

Another graph class that has specific conduction characteristics is that of the uniform core graphs (UCG). A UCG is a core graph (all vertices are core) and has the extra property that all distinct devices have nullity signature $(\eta, \eta-1, \eta-1, \eta-2)$, which leads to insulation at the Fermi level (see Table~\ref{tab:selectionRules}). Hence UCGs have nullity of 2 or more, and code \texttt{IIC2}. Therefore $G\sp{\mathrm C}$ is isomorphic with $nK_1^{\text{loop}}$ for any UCG on $n$ vertices.

\subsection{Conduction graphs derived from non-singular graphs}
\label{sec:connonsing}

Fig.~{\ref{fig:allSmallGraphsAndTheirConductionGraph}}
shows the conduction graphs derived from the simple, connected graphs on up to at most 4 vertices. 
In general, each graph $G$ has a unique conduction graph $G^\text{C}$, but a given graph $H$ may be the conduction graph of two or more non-isomorphic parents. As the figure shows,
the cycle $C_4$ and the diamond ($K_4$ minus an edge) on $4$ vertices 
have isomorphic conduction graphs, which consist of 
the disjoint union of two copies
of $K_2\sp{\text{loop}}$.
The figure also shows two examples where $G^\text{C}$ is
isomorphic with $G$ ($K_2$ and $P_4$).
\begin{figure}[h!]
\begin{center}
\includegraphics[width=0.7\linewidth]{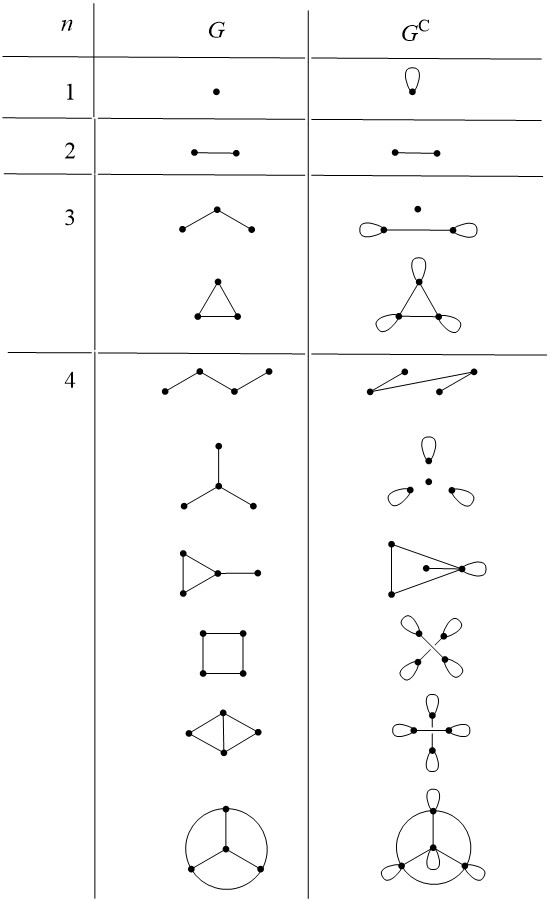} 
\end{center}
\caption{An overview of all connected pairwise non-isomorphic graphs on at most 4 vertices and their conduction graphs.
}\label{fig:allSmallGraphsAndTheirConductionGraph} 
\end{figure}

For graphs with nullity 0, the conduction matrix has a particularly simple form, which gives an alternative way to describe the conduction graph:
\begin{theorem}[\cite{FPTBS14}]\label{th:booleanise}
Let $G$ be a graph of nullity 0. The adjacency matrix of the conduction graph $G\sp{\mathrm C}$ of $G$ is given by $booleanise(A^{-1})$.
\end{theorem}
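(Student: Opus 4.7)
The plan is to verify the claimed matrix identity entrywise, splitting into the diagonal (ipso) and off-diagonal (distinct) cases and combining Table~\ref{tab:selectionRules} with two classical determinantal identities; write $A:=A(G)$ throughout. For a diagonal entry $u=v$, Cauchy's interlacing forces $\eta(G-u)\in\{0,1\}$ when $\eta(G)=0$, and the ipso rows of Table~\ref{tab:selectionRules} then say that the device $(G,u,u)$ conducts iff $\eta(G-u)=0$, that is $\det A(G-u)\neq 0$. Since Cramer's rule gives $(A^{-1})_{uu}=\det A(G-u)/\det A$, this is equivalent to $(A^{-1})_{uu}\neq 0$. Because a loop is encoded by the diagonal entry $2$ in $A(G\sp{\mathrm C})$, the diagonal of $A(G\sp{\mathrm C})$ agrees with $booleanise(A^{-1})$.

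For an off-diagonal entry $u\neq v$, the fact that $\eta(G)=0$ implies $s(0)=\varphi(G,0)=(-1)^n\det A\neq 0$, so the denominator of the transmission formula at $E=0$ is generically nonzero and $T(0)\neq 0$ is equivalent to $(ut-sv)|_{E=0}\neq 0$. Writing $ut-sv=j^2$ by Jacobi's theorem and converting each of $s,t,u,v$ at $E=0$ via $\varphi(H,0)=(-1)^{|V(H)|}\det A(H)$ yields
\[
j(0)^2 \;=\; \det A(G-u)\,\det A(G-v) \,-\, \det A\,\det A(G-u-v).
\]
Sylvester's determinantal identity applied to $A$ on the index pair $\{u,v\}$ shows that the right-hand side equals $(\det A[u\mid v])^2$, the squared minor of $A$ obtained by deleting row $u$ and column $v$. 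The adjugate formula combined with symmetry of $A$ gives $(A^{-1})_{uv}=(-1)^{u+v}\det A[u\mid v]/\det A$, so $(A^{-1})_{uv}\neq 0$ iff $j(0)\neq 0$ iff the device conducts, matching the off-diagonal of $booleanise(A^{-1})$.

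The hard part will be purely bookkeeping: tracking the signs $(-1)^{n-1}$ and $(-1)^{n-2}$ that arise from evaluating $\varphi(H,0)$ on the deleted-vertex graphs of orders $n-1$ and $n-2$, and quoting Sylvester's identity with the correct orientation for the symmetric matrix $A$. A subsidiary point is to verify that the denominator of $T$ at $E=0$ does not accidentally vanish; this follows by viewing it as a sum of a square and a non-negative multiple of $\tilde\beta^2$ that cannot simultaneously vanish when $s(0)\neq 0$, for generic values of the coupling parameter.
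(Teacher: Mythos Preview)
The paper does not supply its own proof of this statement; Theorem~\ref{th:booleanise} is quoted from \cite{FPTBS14} and used as a black box, so there is nothing in the present paper to compare your argument against.

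Your argument is nonetheless correct. The diagonal case is exactly as you say: for $\eta(G)=0$ only the first two ipso rows of Table~\ref{tab:selectionRules} are possible, and Cramer's rule gives $(A^{-1})_{uu}=\det A(G-u)/\det A$. For the off-diagonal case your sign bookkeeping on $s,t,u,v$ at $E=0$ is right, the Desnanot--Jacobi (Sylvester) identity for the symmetric matrix $A$ gives precisely
\[
\det A(G-u)\det A(G-v)-\det A\,\det A(G-u-v)=\bigl(\det A[u\mid v]\bigr)^{2},
\]
and the adjugate formula then identifies this with $(\det A)^{2}(A^{-1})_{uv}^{2}$. The denominator issue is handled correctly: as a polynomial in $\tilde\beta^{2}$ the denominator at $E=0$ has nonzero constant term $s(0)^{2}$, so it vanishes for at most one value of $\tilde\beta^{2}$, and the numerator is independent of $\tilde\beta$. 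An alternative that stays entirely within the paper's own apparatus would be to run through the distinct-device rows of Table~\ref{tab:selectionRules} at $\eta=0$ case by case, using Sylvester to compute $(A^{-1})_{uv}$ in each, and resolving the lone ``Maybe'' row via the spectral identity $(A^{-1})_{uv}=\sum_{i}x_{i,u}x_{i,v}/\lambda_{i}$, which up to the nonzero factor $-s_{0}$ is exactly the paper's $j_{a}$. Your route trades that case analysis for one clean determinantal identity plus the small remark about the denominator.
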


We note that for any invertible matrix $A$, we have $(A^{-1})^{-1}=A$. For a graph $G$ of nullity 0, the conduction graph of the conduction graph of $G$, i.e.\ $(G\sp{\mathrm C})\sp{\mathrm C}$, may be isomorphic with $G$, as already shown in Fig.~\ref{fig:allSmallGraphsAndTheirConductionGraph} (numerical experiments suggest that this holds for all paths on an even number of vertices), but this is not always the case.

\subsection{Conduction graphs derived from singular graphs}
\label{sec:consing}

For a graph $G=(V,E)$ with nullity 1, we can write the adjacency matrix $A$ of $G$ as the following block matrix:
\[
\renewcommand{\arraystretch}{1.5}
A(G) = \left[
\begin{array}{c|c|c}
A_{LL} & A_{LM} & A_{LU} \\ \hline
A_{LM}^T & A_{MM} & A_{MU} \\ \hline
A_{LU}^T & A_{MU}^T & A_{UU}
\end{array}
\right]
\]

\noindent
Here, the letters L, M and U correspond to lower, middle and upper, and the blocks $A_{ij}$ represent the adjacencies between such vertices. Let $n_1$ be the number of lower vertices of $G$. From the selection rules in Table~\ref{tab:selectionRules}, the conduction graph $G\sp{\mathrm C}$ of $G$ has the following adjacency matrix:
    \[
A(G\sp{\mathrm C}) =\left[
\renewcommand{\arraystretch}{1.5}
\begin{array}{c|c|c}
J_{n_1}+I_{n_1} & \mathbf{0} & \mathbf{0}\\ \hline
\mathbf{0} & A_{MM}^{\text{Cond}} & A_{MU}^{\text{Cond}}\\ \hline
\mathbf{0} & (A_{MU}^{\text{Cond}})^T & A_{UU}^{\text{Cond}}
\end{array}
\right]
\]

Here, the matrices $A_{MM}^{\text{Cond}}, A_{MU}^{\text{Cond}}$ and $A_{UU}^{\text{Cond}}$ represent the conduction behaviour for devices associated with respectively middle and middle, middle and upper, and upper and upper vertices. Given
that every graph with nullity at least 1 has at least 1 core vertex,
we have that the conduction graph of a graph $G$ with nullity 1 is connected if and only if {\it all} vertices of $G$ are core vertices. Such graphs correspond precisely to~\textit{nut graphs}, a class of graphs which received attention in the literature. A nut graph is simple, connected, non-bipartite and has no leaves~\cite{SG98}, and it has the defining feature that it is singular, with nullity one and a full kernel vector. Nut graphs are exactly the strong omniconductors of nullity one~\cite{FPTBS14}. If the graph $K_1$ is excluded as trivial~\cite{SG98}, the smallest nut graphs have order 7. A census of small nut graphs, and statistics for nut graphs belonging to various special families are described in~\cite{CFG18} and are available online at~\url{https://houseofgraphs.org/meta-directory/nut}.

We call a graph $G$ a \textit{chemical graph} if $G$ is connected and the maximum degree of a vertex in $G$ is at most three (in the graph theory literature such graphs are called connected subcubic graphs). Chemical nut graphs have vertex degrees 2 and 3 only, and the smallest example is of order 9. The orders and degree signatures realisable by chemical nut graphs have been characterised~\cite{FPB21}. 
Several general constructions for obtaining large nut graphs from smaller ones are known~\cite{BFP24,GPS23,S08,SG98}.

The \textit{adjugate matrix} of a square matrix $A$ is the transpose of the cofactor matrix of $A$. We now recall the following fact about the adjugate matrix of a matrix with nullity one:
\begin{observation}[\cite{HJ13}]
\label{obs:adjugate}
Let $A$ be a square matrix of nullity 1. Then the adjugate matrix $adj(A)$ can be written as:
\begin{equation*}
    adj(A)=\alpha x x^T
\end{equation*}
where $\alpha$ is equal to the product of the non-zero eigenvalues of $A$, i.e.\ the pseudo-determinant, and $x$ is a kernel vector of $A$.
\end{observation}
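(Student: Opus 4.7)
The plan is to derive the formula from the defining identity of the adjugate, namely $A\cdot \mathrm{adj}(A) = \mathrm{adj}(A)\cdot A = \det(A)\,I$, together with the rank–nullity structure of a matrix with a one-dimensional kernel. Because the adjacency matrices appearing in this paper are symmetric, I will treat $A$ as symmetric (this is the case needed in the sequel, and Horn–Johnson handle the general statement by a coincidence of left- and right-null spaces when $A$ is symmetric).

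First I would observe that $\det(A)=0$ since $A$ has nullity $1$, so the identity $A\cdot \mathrm{adj}(A)=0$ forces every column of $\mathrm{adj}(A)$ to lie in $\ker A = \mathrm{span}(x)$. Hence there exists a vector $c$ with $\mathrm{adj}(A)=x\,c^{T}$, so $\mathrm{adj}(A)$ has rank at most one. Applying the analogous identity $\mathrm{adj}(A)\cdot A=0$ on the right and using that $A^{T}=A$ shows that every row of $\mathrm{adj}(A)$ lies in $\ker A = \mathrm{span}(x)$ as well; combining this with the column statement yields $\mathrm{adj}(A)=\alpha\,xx^{T}$ for some scalar $\alpha$.

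Next I would identify $\alpha$. Since the diagonal entries of $\mathrm{adj}(A)$ are the principal $(n-1)\times(n-1)$ minors of $A$, the trace of $\mathrm{adj}(A)$ equals the sum of these minors, which is (up to sign) a coefficient of the characteristic polynomial of $A$ and hence equals $\sum_{i=1}^{n}\prod_{j\neq i}\lambda_{j}$. With exactly one zero eigenvalue, only the term omitting that zero survives, giving $\mathrm{tr}(\mathrm{adj}(A)) = \prod_{\lambda_{j}\neq 0}\lambda_{j}$, the pseudo-determinant. Comparing with $\mathrm{tr}(\alpha\,xx^{T})=\alpha\,\|x\|^{2}$ then pins down $\alpha$ in terms of the pseudo-determinant and the chosen normalisation of $x$; in particular, taking $x$ to be a unit kernel vector (or absorbing $\|x\|^{2}$ into $\alpha$) yields the stated identification of $\alpha$ with the pseudo-determinant.

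The main obstacle is arguing cleanly that the two rank-one factorisations obtained from the left and right kernel conditions share the same vector $x$, rather than merely a pair of kernel vectors up to scalar. For symmetric $A$ this is immediate because left and right null spaces coincide, but for a fully general square matrix one would have to invoke a biorthogonal pair of left/right null vectors; I would just remark that in our setting $A$ is a symmetric adjacency matrix, so this subtlety does not arise. Everything else is a direct consequence of the adjugate identity and a trace computation.
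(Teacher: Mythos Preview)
The paper does not supply its own proof of this observation: it is stated as a recalled fact with a citation to Horn and Johnson, so there is no in-paper argument to compare against. Your argument is the standard one and is correct for the symmetric case that is actually used here: from $A\cdot\mathrm{adj}(A)=\mathrm{adj}(A)\cdot A=\det(A)I=0$ you force $\mathrm{adj}(A)$ to have both columns and rows in the one-dimensional kernel, hence $\mathrm{adj}(A)=\alpha xx^{T}$, and the trace identity $\mathrm{tr}(\mathrm{adj}(A))=\sum_i\prod_{j\neq i}\lambda_j=\prod_{\lambda_j\neq 0}\lambda_j$ pins down $\alpha$.

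Two small remarks. First, you are right that the statement as written tacitly assumes symmetry (or at least coinciding left and right null spaces); for a general square matrix of nullity one the conclusion would be $\mathrm{adj}(A)=\alpha xy^{T}$ with $x$ a right and $y$ a left null vector, and the paper's $xx^{T}$ form relies on $A=A^{T}$. Second, the identification $\alpha=\prod_{\lambda_j\neq 0}\lambda_j$ presupposes $\|x\|=1$, which the observation leaves implicit; you flag this normalisation issue correctly. Neither point affects the downstream use in the paper, where only the support of $\mathrm{adj}(A)$ matters for booleanisation.
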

From Observation~\ref{obs:adjugate}, the matrix obtained by booleanising the adjugate matrix of the adjacency matrix $A$ corresponding to a graph of nullity 1 has the following form:

    \[
booleanise(adj(A))=\left[
\renewcommand{\arraystretch}{1.5}
\begin{array}{c|c|c}
J_{n_1}+I_{n_1} & \mathbf{0} & \mathbf{0}\\ \hline
\mathbf{0} & \mathbf{0} & \mathbf{0}\\ \hline
\mathbf{0} & \mathbf{0} & \mathbf{0}\\
\end{array}
\right]
\]
Hence, whereas
for graphs $G$ of nullity 0 with adjacency matrix $A$, booleanising $A^{-1}$ yields the adjacency matrix of the conduction graph of $G$, but for graphs $G$ of nullity 1 other than nut graphs, booleanising $adj(A)$ reveals only a part of the conduction graph of $G$. 
For graphs with nullity larger than 1, the situation is more complicated and we do not know of any such analogous statements.

\section{Ipso omni-insulators and conduction-isomorphic graphs}
\label{sec:ipsoOmniInsulators}
An \textit{ipso omni-insulator} is a graph $G$ for which the conduction graph $G^\text{C}$ contains no loops. 
We recall the following important theorem about ipso omni-insulators:
\begin{theorem}[\cite{FPTBS14}]
 Every simple connected ipso omni-insulator has nullity 0.
\end{theorem}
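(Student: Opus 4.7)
The strategy is to extract a structural consequence of the ipso omni-insulator property from Table~\ref{tab:selectionRules} and then confront it with the basic existence of a core vertex whenever $\eta(G)\geq 1$.

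First I would unpack the definition. Saying that $G$ is an ipso omni-insulator means that $G\sp{\mathrm C}$ has no loops, i.e.\ every ipso device $(G,u,u)$ insulates at the Fermi level. Inspecting the three $u=v$ rows of Table~\ref{tab:selectionRules}, the only one marked \emph{No} is the row with $\eta(G-u)=\eta(G)+1$. Hence the ipso omni-insulator hypothesis is equivalent to the condition
\begin{equation*}
\eta(G-u) \;=\; \eta(G)+1 \qquad \text{for every vertex } u \text{ of } G,
\end{equation*}
that is, every vertex of $G$ is an upper (and in particular a core-forbidden) vertex.

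Next I would argue by contradiction and assume $\eta(G)\geq 1$. Then $A(G)$ admits a nonzero kernel vector $x$, and choosing any coordinate $u$ with $x_u\neq 0$ produces a core vertex by the definition recalled in Section~\ref{sec:prelim}. For such a vertex the same definition yields $\eta(G-u)=\eta(G)-1$, which directly contradicts the conclusion of the previous paragraph. Therefore $\eta(G)=0$, as required.

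The argument has no real obstacle: essentially all of the work is outsourced to the selection rules of Table~\ref{tab:selectionRules} and to the standard fact that a vertex on which some kernel vector is nonzero drops the nullity by one upon deletion. The only mildly delicate point to double-check is that the hypotheses of simplicity and connectedness play no hidden role in the core-vertex step; they do not, and enter only via the ambient conventions adopted throughout the paper.
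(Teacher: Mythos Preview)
Your argument is correct: from the ipso rows of Table~\ref{tab:selectionRules} the only insulating case is $\eta(G-u)=\eta(G)+1$, so ipso omni-insulation forces every vertex to be upper, which is incompatible with the existence of a core vertex and hence with $\eta(G)\geq 1$. Note, however, that the paper does not supply its own proof of this theorem; it is simply quoted from~\cite{FPTBS14}, so there is no in-paper argument to compare against. Your reconstruction is exactly the natural one given the selection rules and the core-vertex facts the paper recalls in Section~\ref{sec:prelim}.
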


Since in the current paper we are only interested in simple (i.e.\ without loops and parallel edges), connected graphs, we have 

\begin{observation}
\label{obs:condIsoIsIpsoOmniInsulator}
 Every simple, connected, conduction-isomorphic graph is an ipso omni-insulator.
\end{observation}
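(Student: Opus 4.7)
The plan is to observe that this is essentially immediate from the definitions, once one tracks what "simple" and "conduction-isomorphic" imply about loops. Recall from Section~\ref{sec:prelim} that in this paper a simple graph $G$ is one without parallel edges or loops, while $G\sp{\mathrm C}$ is allowed to have loops (a loop at $u$ corresponds precisely to the ipso device $(G,u,u)$ conducting). The definition of an ipso omni-insulator is exactly that $G\sp{\mathrm C}$ is loopless.

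The first step is to note that graph isomorphism (as defined in Section~1.3 via adjacency-preserving bijections) preserves the number of loops incident with each vertex: if $h : V(G) \to V(G\sp{\mathrm C})$ is an isomorphism and $u \in V(G)$, then $u$ has a loop in $G$ if and only if $h(u)$ has a loop in $G\sp{\mathrm C}$. Since $G$ is simple, no vertex of $G$ carries a loop, so no vertex of $G\sp{\mathrm C}$ carries a loop either. By definition, this means $G$ is an ipso omni-insulator.

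Equivalently, at the level of adjacency matrices, $A(G)$ has zero diagonal, so any permutation-conjugate matrix $P^T A(G) P$ also has zero diagonal; but conduction-isomorphism supplies a permutation matrix $P$ with $A(G\sp{\mathrm C}) = P^T A(G) P$, and the diagonal of $A(G\sp{\mathrm C})$ records exactly which vertices of $G\sp{\mathrm C}$ bear loops. Hence $A(G\sp{\mathrm C})$ has zero diagonal, i.e.\ $G\sp{\mathrm C}$ is loopless.

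There is no real obstacle here: the statement is a bookkeeping consequence of the conventions that (i) loops are recorded as diagonal entries in $A(G\sp{\mathrm C})$ and (ii) "simple" excludes loops in $G$. The content of the observation is really to flag this consequence so that the preceding theorem of Fowler et al., which forces every simple connected ipso omni-insulator to have nullity zero, can be applied in the sequel to all conduction-isomorphic graphs.
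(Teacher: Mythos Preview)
Your proof is correct and follows the same reasoning the paper implicitly relies on: the observation is stated without proof in the paper, as an immediate consequence of the fact that a simple graph has no loops and isomorphism preserves looplessness, so $G\sp{\mathrm C}$ is loopless, i.e.\ $G$ is an ipso omni-insulator. Your write-up simply makes this explicit.
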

This is the main reason why we are interested in ipso omni-insulators in the present context. Furthermore:
\begin{corollary}
\label{cor:condIsoNullity0}
 Every simple, connected, conduction-isomorphic graph has nullity~0.
\end{corollary}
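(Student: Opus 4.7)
The plan is to chain together the two results that immediately precede the corollary in the excerpt: Observation~\ref{obs:condIsoIsIpsoOmniInsulator} (every simple connected conduction-isomorphic graph is an ipso omni-insulator) and the theorem from~\cite{FPTBS14} (every simple connected ipso omni-insulator has nullity~$0$). So the corollary should be stated as an immediate consequence rather than requiring fresh machinery.

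Concretely, first I would unpack the hypothesis: let $G$ be simple, connected, and conduction-isomorphic, so by definition $G\sp{\mathrm C} \cong G$. Since $G$ is simple it has no loops, and any graph isomorphic to a loopless graph is loopless; hence $G\sp{\mathrm C}$ is loopless. Next I would invoke the definition of an ipso omni-insulator from Section~\ref{sec:ipsoOmniInsulators}: a graph whose conduction graph contains no loops. Thus $G$ is an ipso omni-insulator, which is exactly the content of Observation~\ref{obs:condIsoIsIpsoOmniInsulator} applied here.

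Finally, I would apply the theorem of~\cite{FPTBS14} stated just above: since $G$ is a simple connected ipso omni-insulator, $\eta(G)=0$. This completes the argument.

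There is no real obstacle here, since the corollary is proved by composition of two already-established facts. The only care needed is the logical ordering and making sure the simplicity and connectedness hypotheses are transported through both statements, but neither imposes any nontrivial work.
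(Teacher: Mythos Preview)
Your proposal is correct and matches the paper's approach exactly: the paper states the corollary without proof, as an immediate consequence of Observation~\ref{obs:condIsoIsIpsoOmniInsulator} together with the preceding theorem from~\cite{FPTBS14}. Your unpacking of why $G\sp{\mathrm C}$ must be loopless (via the isomorphism with the simple graph $G$) is the natural justification the paper leaves implicit.
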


Fowler et al.~\cite{FPTBS14} determined all ipso omni-insulators in a number of different graph families, e.g.\ connected simple graphs on at most 10 vertices or chemical graphs on at most 16 vertices. Interestingly, while there are plenty of ipso omni-insulators of even order, the authors of that paper
were unable to find any ipso omni-insulators of odd order and asked whether such graphs cannot exist or whether they are just very rare. By searching through lists of graphs with at most two vertex orbits~\cite{HR20}, we found several ipso 
omni-insulators of odd order. We close this section by providing the smallest example that we found in Fig.~\ref{fig:ipsoOmniInsulator} (available at~\url{https://houseofgraphs.org/graphs/35457}).

\begin{figure}[h!]
\begin{center}
\begin{tikzpicture}[scale=0.85]
  \def\sides{15}
  \def\radius{3}
  \foreach \i in {1,...,\sides} {
    \fill ({360/\sides * \i}:\radius) circle (2pt);
  }
  
  \foreach \i in {1,4,7,10,13} {
    \draw ({360/\sides * (\i + 1)}:\radius) -- ({360/\sides * \i}:\radius);
    \draw ({360/\sides * (\i + 4)}:\radius) -- ({360/\sides * \i}:\radius);
    \draw ({360/\sides * (\i + 6)}:\radius) -- ({360/\sides * \i}:\radius);
    \draw ({360/\sides * (\i + 9)}:\radius) -- ({360/\sides * \i}:\radius);
    \draw ({360/\sides * (\i + 11)}:\radius) -- ({360/\sides * \i}:\radius);
    \draw ({360/\sides * (\i + 14)}:\radius) -- ({360/\sides * \i}:\radius);
  }
    \foreach \i in {2,5,8,11,14} {
    \draw ({360/\sides * (\i + 1)}:\radius) -- ({360/\sides * \i}:\radius);
    \draw ({360/\sides * (\i + 3)}:\radius) -- ({360/\sides * \i}:\radius);
    \draw ({360/\sides * (\i + 11)}:\radius) -- ({360/\sides * \i}:\radius);
    \draw ({360/\sides * (\i + 12)}:\radius) -- ({360/\sides * \i}:\radius);
    \draw ({360/\sides * (\i + 14)}:\radius) -- ({360/\sides * \i}:\radius);
  }
  \foreach \i in {3,6,9,12,15} {
    \draw ({360/\sides * (\i + 1)}:\radius) -- ({360/\sides * \i}:\radius);
    \draw ({360/\sides * (\i + 3)}:\radius) -- ({360/\sides * \i}:\radius);
    \draw ({360/\sides * (\i + 4)}:\radius) -- ({360/\sides * \i}:\radius);
    \draw ({360/\sides * (\i + 12)}:\radius) -- ({360/\sides * \i}:\radius);
    \draw ({360/\sides * (\i + 14)}:\radius) -- ({360/\sides * \i}:\radius);
  }
  
\end{tikzpicture}
\end{center}
\caption{An ipso omni-insulator of order 15.}\label{fig:ipsoOmniInsulator} 
\end{figure}
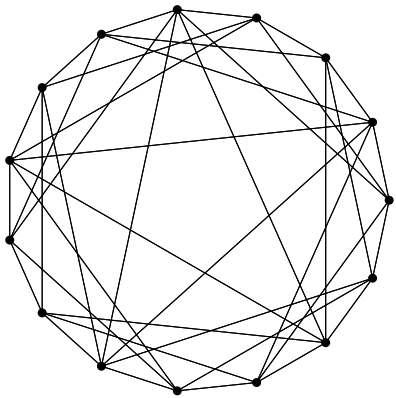

\par\noindent
None of the odd ipso omni-insulators that we found are chemical, and none are
conduction-isomorphic. We therefore pose the following questions:
\begin{question}
    Does there exist a chemical ipso omni-insulator of odd order?
\end{question}
\begin{question}
    Does there exist a conduction-isomorphic graph of odd order?
\end{question}
\section{Infinite families of conduction-isomorphic graphs}\label{sec:infFam}
In this section, 
we describe three infinite families of conduction-isomorphic graphs, each with a different minimum degree.

\subsection{Conduction-isomorphic graphs with minimum degree one}\label{sec:infFam1}
The first infinite family of graphs shows that any arbitrary graph occurs as an induced subgraph of infinitely many conduction-isomorphic graphs with minimum degree one:

\begin{theorem}\label{th:coronaGraphs}
Let $G$ be a graph on $n$ vertices. For all integers $k \geq 1$, there exists a conduction-isomorphic graph $G\sp\prime$ with minimum degree one on $2^kn$ vertices which contains $G$ as an induced subgraph.
\end{theorem}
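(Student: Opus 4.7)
The label name \texttt{coronaGraphs} strongly suggests that the construction is based on iterating the corona $H \mapsto H \circ K_1$, i.e., attaching a fresh pendant vertex to every vertex of $H$. Starting from $G_0 := G$, define $G_{i+1} := G_i \circ K_1$ for $i = 0, 1, \ldots, k-1$, and set $G' := G_k$. Then $G_k$ has $2^k n$ vertices, contains $G$ as an induced subgraph (each corona step only adds new vertices together with pendant edges incident to them), and the pendant vertices created at the last step witness that the minimum degree is one. The only real work is to prove that $G_k$ is conduction-isomorphic.

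The plan is to reduce this to a single block-matrix identity, and then iterate. Order the vertices of $H \circ K_1$ so that the original vertices of $H$ come first and the new pendants come in the matching order; then
\[
A(H \circ K_1) \;=\; \begin{pmatrix} A(H) & I \\ I & \mathbf{0} \end{pmatrix}.
\]
A direct calculation shows that this matrix is always invertible (independently of whether $H$ itself is singular), with inverse
\[
A(H \circ K_1)^{-1} \;=\; \begin{pmatrix} \mathbf{0} & I \\ I & -A(H) \end{pmatrix}.
\]
Since $H$ is simple, $-A(H)$ has zero diagonal, so booleanising the lower-right block just gives $A(H)$, and Theorem~\ref{th:booleanise} then yields
\[
A((H \circ K_1)^{\mathrm{C}}) \;=\; \begin{pmatrix} \mathbf{0} & I \\ I & A(H) \end{pmatrix}.
\]
Conjugating by the permutation matrix that swaps the two blocks of vertices turns this back into $A(H \circ K_1)$, so $(H \circ K_1)^{\mathrm{C}} \cong H \circ K_1$ for \emph{every} simple graph $H$.

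Applying this observation with $H = G_{k-1}$ immediately gives $G_k^{\mathrm{C}} \cong G_k$, which is what we want. (Because the invertibility of $A(H \circ K_1)$ does not require $H$ itself to be non-singular, we do not even need induction on $i$ to control nullity; the single-step identity above is sufficient and makes the whole construction go through for arbitrary base graphs $G$, singular or not.) The main thing to get right is the block-matrix verification and the correct ordering of the vertex labels so that the isomorphism between $G_k^{\mathrm{C}}$ and $G_k$ is exactly the ``swap the two copies'' permutation; the other conclusions (vertex count $2^k n$, induced copy of $G$, minimum degree one at the outermost layer of pendants) are immediate from the construction and do not require additional argument.
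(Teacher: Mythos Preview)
Your proposal is correct and follows essentially the same approach as the paper: the paper also builds the sequence $G_0 = G$, $G_{i+1}$ with adjacency matrix $\begin{pmatrix} A(G_i) & B \\ B^T & \mathbf{0} \end{pmatrix}$, computes the inverse $\begin{pmatrix} \mathbf{0} & B \\ B^T & -A(G_i) \end{pmatrix}$, booleanises, and uses the block-swap permutation as the isomorphism. The only cosmetic difference is that the paper allows an arbitrary orthogonal $0$--$1$ matrix $B$ commuting with $A(G_i)$ in place of your $I$, but immediately afterwards observes that any such $B$ must be a permutation matrix, so the added generality is nominal and the corona case $B=I$ is exactly your construction.
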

\begin{proof}
Set $G_0 := G$. We will construct an infinite sequence of conduction-isomorphic graphs $G_1, G_2, ...$ with minimum degree one, where $G_i$ contains $G_{i-1}$ as an induced subgraph and $|V(G_i)|=2|V(G_{i-1})|$ ($i\geq 1$), from which the theorem follows.

Let $A$ be the adjacency matrix of $G_{i-1}$, which contains $2^{i-1}n$ vertices. Let $B$ be any square orthogonal 0-1 matrix with $2^{i-1}n$ rows such that $AB=BA$ and every row and column of $B$ contains precisely one entry equal to one (note that such a $B$ indeed always exists, for example by taking $B=I$). The adjacency matrix of $G_{i}$ is now given by:
    \[
\left[
\renewcommand{\arraystretch}{1.5}
\begin{array}{c|c}
A & B \\ \hline
B^T & \mathbf{0}
\end{array}
\right].
\]
Using blockwise matrix multiplication, the fact that $B$ is orthogonal and the fact that every orthogonal matrix is symmetric, we can see that the inverse of this matrix is given by:
    \[
\left[
\renewcommand{\arraystretch}{1.5}
\begin{array}{c|c}
\mathbf{0} & B \\ \hline
B^T & -A
\end{array}
\right].
\]

From Theorem~\ref{th:booleanise}, booleanising the above matrix yields the adjacency matrix of the conduction graph of $G_{i}$. Clearly $G_{i-1}$ occurs as an induced subgraph of $G_i$ by construction. Moreover, $G_{i}$ is isomorphic with its conduction graph with an isomorphism that maps every vertex of $V(G_i)$ corresponding to row $j$ to the vertex corresponding to row $j+2^{i-1}n$ and vice-versa (for all $0 \leq j \leq 2^{i-1}n-1$).
\end{proof}

We remark that the choice of $B$ in the previous proof is strongly constrained\footnote{A special case of conduction-isomorphic graphs arises when the adjacency matrix is equal to its inverse. This remark also shows that a matching, i.e.\ a forest of $K_2$s, is the only graph with that property.}:
\begin{remark}
Let $B$ be an orthogonal 0-1 matrix, then $B$ is a permutation matrix.
\end{remark}
\begin{proof}
Since every orthogonal matrix is symmetric and $BB^T=I$, we know that each row of $B$ can only contain at most one non-zero element. Moreover, $B$ cannot contain a row full of zeroes because $B$ is invertible, so $B$ must contain precisely one zero for each row. Hence, it is a permutation matrix.
\end{proof}

However, we present the proof of Theorem~\ref{th:coronaGraphs} in its general form, because it might be interesting for an edge-weighted variant of conduction-isomorphic graphs (where matrix entries can be distinct from 0 or 1) and where the matrix $B$ can contain more than one entry per row which is non-zero.

A \textit{corona graph} is obtained by adding a pendant edge to every vertex of a base graph $G$. The graphs considered in Theorem~\ref{th:coronaGraphs} are precisely the corona graphs (and thus every corona graph is conduction-isomorphic). Two infinite families of chemical conduction-isomorphic graphs which are corona graphs are obtained by choosing the base graph as a path (resulting in \textit{combs}) and a cycle (resulting in \textit{radialenes}). These chemically interesting graphs and their conduction graphs are shown in Fig.~\ref{fig:combsAndRadialenes}. Since every chemical graph is connected, we obtain that these graphs are the only chemical conduction-isomorphic graphs which are corona graphs (as a direct corollary of Theorem~\ref{th:coronaGraphs}):
\begin{corollary}
Let $G$ be a conduction-isomorphic corona graph, which is also a chemical graph. Then $G$ is a comb or a radialene. Moreover, every comb and radialene is conduction-isomorphic.
\end{corollary}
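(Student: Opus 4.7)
The plan is to establish the two directions separately, each being a short structural argument that piggybacks on Theorem~\ref{th:coronaGraphs} and on the preceding observation that the graphs produced by that theorem are precisely the corona graphs.

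For the \emph{moreover} direction (that every comb and every radialene is conduction-isomorphic), I would simply appeal to Theorem~\ref{th:coronaGraphs}. A comb is by definition the corona of a path $P_n$, and a radialene is the corona of a cycle $C_n$; both are thus corona graphs on $2n$ vertices, and the theorem (together with the remark that the construction yields exactly the corona graphs) gives conduction-isomorphism directly. No further verification is needed.

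For the forward direction, suppose $G$ is a chemical conduction-isomorphic corona graph, and let $H$ be the base graph of $G$, so that $G$ is obtained from $H$ by attaching one pendant vertex to every vertex of $H$. The argument I would use is purely degree-theoretic: each vertex $v\in V(H)$ has degree $\deg_H(v)+1$ in $G$, and since $G$ is chemical we have $\deg_H(v)+1\le 3$, hence $\deg_H(v)\le 2$. Since $G$ is connected and is obtained from $H$ by attaching leaves, $H$ is itself connected. A connected graph with maximum degree at most $2$ is either a path $P_n$ or a cycle $C_n$, so $G$ is either a comb or a radialene, as desired.

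I do not foresee any real obstacle; both directions are formal consequences of Theorem~\ref{th:coronaGraphs} combined with the classification of connected graphs of maximum degree at most $2$. The only minor care point is to handle degenerate small cases (for example $H=K_1$ giving $G=K_2$, and $H=P_2$ giving a small comb) to be sure the statement covers them, but these fit cleanly into the path/cycle dichotomy (treating $P_1$ as the trivial path).
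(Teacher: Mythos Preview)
Your proposal is correct and follows essentially the same route as the paper. The paper states the corollary as an immediate consequence of Theorem~\ref{th:coronaGraphs} together with the definition of a chemical graph, and your argument simply makes explicit the degree-theoretic step (base graph has maximum degree at most $2$, hence is a path or a cycle) that the paper leaves implicit.
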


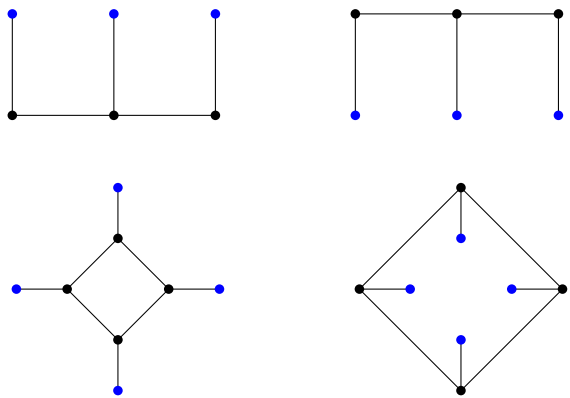
\begin{figure}[h!]

\begin{center}
\begin{tikzpicture}[scale=0.9]
  \def\inter{1.5}
  \def\k{3}

    \foreach \i in {2,...,\k}
    {
        \draw ({\inter * (\i-1)}, \inter * 1) -- ({\inter * (\i)}, \inter * 1);
    }

    \foreach \i in {1,...,\k}
    {
        \draw ({\inter * (\i)}, \inter * 1) -- ({\inter * (\i)}, \inter * 2);
    }

      \foreach \i in {1,...,\k}
    {
        \fill (\inter * \i,\inter * 1) circle (2pt);
    }

    \foreach \i in {1,...,\k}
    {
        \fill[blue] (\inter * \i,\inter * 2) circle (2pt);
    }
\end{tikzpicture} \quad\quad\quad\quad\quad
\vspace{0.5cm}
\begin{tikzpicture}[scale=0.9]
  \def\inter{1.5}
  \def\k{3}

    \foreach \i in {2,...,\k}
    {
        \draw ({\inter * (\i-1)}, \inter * 2) -- ({\inter * (\i)}, \inter * 2);
    }

    \foreach \i in {1,...,\k}
    {
        \draw ({\inter * (\i)}, \inter * 1) -- ({\inter * (\i)}, \inter * 2);
    }

      \foreach \i in {1,...,\k}
    {
        \fill[blue]  (\inter * \i,\inter * 1) circle (2pt);
    }

    \foreach \i in {1,...,\k}
    {
        \fill(\inter * \i,\inter * 2) circle (2pt);
    }

\end{tikzpicture} 
\vspace{0.3cm}

\begin{tikzpicture}[scale=0.9]
  \def\sides{4}
  \def\radius{0.75}
  
  \foreach \i in {1,...,\sides} {
    \draw ({360/\sides * (\i + 1)}:\radius) -- ({360/\sides * \i}:\radius);
  }

  \foreach \i in {1,...,\sides} {
    \draw ({360/\sides * (\i)}:\radius) -- ({360/\sides * \i}:2*\radius);
  }

  \foreach \i in {1,...,\sides} {
    \fill ({360/\sides * \i}:\radius) circle (2pt);
  }

  \foreach \i in {1,...,\sides} {
    \fill[blue] ({360/\sides * \i}:2*\radius) circle (2pt);
  }
  
\end{tikzpicture} \quad\quad\quad\quad\quad
\begin{tikzpicture}[scale=0.9]
  \def\sides{4}
  \def\radius{0.75}
  
  \foreach \i in {1,...,\sides} {
    \draw ({360/\sides * (\i + 1)}:2*\radius) -- ({360/\sides * \i}:2*\radius);
  }

  \foreach \i in {1,...,\sides} {
    \draw ({360/\sides * (\i)}:\radius) -- ({360/\sides * \i}:2*\radius);
  }

    \foreach \i in {1,...,\sides} {
    \fill[blue]  ({360/\sides * \i}:\radius) circle (2pt);
  }

  \foreach \i in {1,...,\sides} {
    \fill ({360/\sides * \i}:2*\radius) circle (2pt);
  }
\end{tikzpicture} 
\end{center}
\caption{The graphs in the first column represent a comb and a radialene. The corresponding conduction graphs are shown in the second column. 
}\label{fig:combsAndRadialenes}
\end{figure}

In chemistry, it is also often useful to know the spectrum of the graphs one is dealing with. For a $\pi$-system, the spectrum is related to electronic structure via the equivalence of the simple Hückel  model~\cite{S61} and the eigenvalue equation for the adjacency matrix of the molecular graph (the graph of the carbon framework). The chemical concepts of molecular orbitals and orbital energies then correspond to eigenvectors and eigenvalues of the adjacency matrix. Many classic results of spectral graph theory have direct consequences for molecular electronic structure in this simplified model. A striking example is given by the Coulson-Rushbrooke Pairing Theorem~\cite{CR40}, which allows prediction of the ground-state $\pi$-electron configuration for every bipartite molecular graph, given its nullity. In the context of Hückel theory and $\pi$-systems, the graphs of interest are the chemical graphs (previously defined as connected graphs with maximum degree at most 3). In other contexts, a definition that allows degree 4 is sometimes used.

For corona graphs, it is possible to describe their spectrum in terms of the spectrum of the base graph:

\begin{theorem}
Let $G=(V_1,E_1)$ be a graph on $n$ vertices with spectrum $\sigma(G)=\{\lambda_1, \lambda_2, ..., \lambda_n\}$ and let $G'=(V_1 \cup V_2, E_2)$ be a corona graph with base graph $G$ (where $|V_2|=n$). We have: 
$$
\sigma(G') = \left\{ \frac{\lambda_i \pm \sqrt{\lambda_i^2 + 4} \vphantom{\frac{\lambda_i^2 + 4}{2}}}{2} \bigm| \lambda_i \in \sigma(G) \right\}.
$$
\end{theorem}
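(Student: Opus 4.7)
The plan is to write the adjacency matrix of $G'$ in block form and then construct eigenvectors of $A(G')$ from those of $A(G)$. Order the vertices so that the vertices of the base graph $V_1$ come first and the pendant vertices of $V_2$ come second, with the $i$-th pendant attached to the $i$-th vertex of $V_1$. Then
\[
A(G') = \left[
\renewcommand{\arraystretch}{1.2}
\begin{array}{c|c}
A & I \\ \hline
I & \mathbf{0}
\end{array}
\right],
\]
where $A = A(G)$. Let $v_i$ be an eigenvector of $A$ for the eigenvalue $\lambda_i$. The natural ansatz is to seek eigenvectors of $A(G')$ of the form $(\alpha v_i, \beta v_i)^T$.

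Applying $A(G')$ to this ansatz and equating with $\mu(\alpha v_i, \beta v_i)^T$ produces the two scalar conditions $\alpha\lambda_i + \beta = \mu\alpha$ and $\alpha = \mu\beta$. Substituting the second into the first and using $\beta\neq 0$ (which must hold since $\beta=0$ forces $\alpha=0$) yields the quadratic equation
\[
\mu^2 - \lambda_i\mu - 1 = 0,
\]
whose two roots are $\mu_i^{\pm} = (\lambda_i \pm \sqrt{\lambda_i^2+4})/2$. For each root there is a one-parameter family of eigenvectors obtained by choosing $\beta \neq 0$ and setting $\alpha = \mu\beta$.

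To finish, I would verify that this construction produces the complete spectrum of $G'$ by a dimension count. Choose a basis $\{v_1,\ldots,v_n\}$ of eigenvectors of $A$ (which exists because $A$ is real symmetric). For each $i$, the two values $\mu_i^{+}$ and $\mu_i^{-}$ are distinct because $\lambda_i^2+4>0$, so the corresponding eigenvectors $(\mu_i^{\pm}\beta\, v_i,\beta\, v_i)^T$ are linearly independent in $\mathbb{R}^{2n}$. Moreover, across different indices $i$, the second blocks $v_i$ are linearly independent, which is enough to guarantee that the full collection of $2n$ constructed eigenvectors is linearly independent. Hence they form a basis of $\mathbb{R}^{2n}$, and the multiset $\{\mu_i^{\pm} : 1\le i\le n\}$ is the full spectrum of $G'$.

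The only mildly subtle step is the linear independence argument; everything else is direct block-matrix manipulation. One could alternatively avoid the basis bookkeeping entirely by computing $\det(\mu I_{2n} - A(G'))$ via the Schur complement formula, which for $\mu \neq 0$ gives $\det(\mu I_{2n} - A(G')) = \det(\mu I_n)\cdot\det\bigl((\mu I_n - A) - \mu^{-1}I_n\bigr) = \mu^n\det\bigl(\mu I_n - A - \mu^{-1}I_n\bigr)$, and then using the spectral decomposition of $A$ to factor this as $\prod_{i=1}^n (\mu^2 - \lambda_i\mu - 1)$; the roots of this polynomial are exactly the claimed values, confirming the formula.
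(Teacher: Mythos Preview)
Your proof is correct and follows essentially the same approach as the paper: both write $A(G')$ in the block form $\left[\begin{smallmatrix} A & I \\ I & \mathbf{0}\end{smallmatrix}\right]$ and build eigenvectors of $A(G')$ as stacked scalar multiples of eigenvectors of $A$, arriving at the quadratic $\mu^2 - \lambda_i\mu - 1 = 0$. Your version is in fact slightly more complete, since you explicitly supply the linear-independence count (and the Schur-complement alternative) that certifies these $2n$ values exhaust the spectrum, a step the paper leaves implicit.
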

\begin{proof}
Let $x_1, x_2, ..., x_n$ be the eigenvectors of the adjacency matrix of $G$ corresponding to $\lambda_1, \lambda_2, ..., \lambda_n$. We claim that for each $\lambda_i \in \sigma(G)$ there are two eigenvalues $\mu_i, \nu_i := \frac{\lambda_i \pm \sqrt{\lambda_i^2 + 4} \vphantom{\frac{\lambda_i^2 + 4}{2}}}{2}$ in the spectrum of $G'$ with respectively corresponding eigenvectors

    \[
y_{i},z_{i} := \left[
\begin{array}{c}
x_{i,0}\\
x_{i,1}\\
\vdots\\
x_{i,n-1}\\
\frac{2x_{i,0}\vphantom{2x_{i,0}}}{\lambda_i \pm \sqrt{\lambda_i^2+4}}\\
\frac{2x_{i,1}\vphantom{2x_{i,1}}}{\lambda_i \pm \sqrt{\lambda_i^2+4}}\\
\vdots\\
\frac{2x_{i,n-1}\vphantom{2x_{i,n-1}}}{\lambda_i \pm \sqrt{\lambda_i^2+4}}\\
\end{array}
\right].
\]
Indeed, if $G$ has adjacency matrix $A$, then $G'$ has adjacency matrix
    \[
A'=\left[
\renewcommand{\arraystretch}{1.5}
\begin{array}{c|c}
A & I \\ \hline
I & \mathbf{0}
\end{array}
\right].
\]
Therefore, for every vertex $v \in V_1$ we have: 

\begin{align*}
(A'y_{i})_v,(A'z_{i})_v&=(Ax_i)_v+\frac{2x_{i,v}\vphantom{2x_{i,v}}}{\lambda_i \pm \sqrt{\lambda_i^2+4}}\\
&=\lambda_ix_{i,v}+\frac{2x_{i,v}\vphantom{2x_{i,v}}}{\lambda_i \pm \sqrt{\lambda_i^2+4}}\\
&=\lambda_ix_{i,v}+\frac{2x_{i,v}(\lambda_i \mp \sqrt{\lambda_i^2+4})\vphantom{2x_{i,v}(\lambda_i \mp \sqrt{\lambda_i^2+4})}}{\lambda_i^2 - (\lambda_i^2+4)}\\
&=\frac{2\lambda_ix_{i,v}}{2}+\frac{x_{i,v}(\pm \sqrt{\lambda_i^2+4}-\lambda_i)\vphantom{x_{i,v}(\pm \sqrt{\lambda_i^2+4}-\lambda_i)}}{2}\\
&=\frac{\lambda_i \pm \sqrt{\lambda_i^2 + 4} \vphantom{\frac{\lambda_i^2 + 4}{2}}}{2}x_{i,v}\\
&= (\mu_i y_i)_v, (\nu_i z_i)_v.
\end{align*}
Moreover, for every vertex $v \in V_1$, we have:
\begin{align*}
(A'y_{i})_{v+n},(A'z_{i})_{v+n}&=x_{i,v},x_{i,v}\\
&=\frac{\lambda_i \pm \sqrt{\lambda_i^2 + 4} \vphantom{\frac{\lambda_i^2 + 4}{2}}}{2}\frac{2x_{i,v}\vphantom{2x_{i,n-1}}}{\lambda_i \pm \sqrt{\lambda_i^2+4}}\\
&= (\mu_i y_i)_{v+n}, (\nu_i z_i)_{v+n}.
\end{align*}
This proves the claim and completes the proof.
\end{proof}

\subsection{Chemical conduction-isomorphic graphs with minimum degree two}\label{sec:infFam3}
The graphs from Section~\ref{sec:infFam1} all have minimum degree equal to one. In the current section we present an infinite family of chemical conduction-isomorphic graphs with minimum degree two.

\begin{theorem}
\label{th:minDegTwo}
    For each integer $k \geq 2$, there exists a chemical conduction-isomorphic graph with minimum degree equal to 2 on $4k$ vertices.
\end{theorem}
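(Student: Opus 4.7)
The plan is to exhibit an explicit family of chemical graphs $H_k$ on $4k$ vertices with $\delta(H_k)=2$ and $\Delta(H_k)=3$, and then verify that each $H_k$ is conduction-isomorphic by invoking Theorem~\ref{th:booleanise}. By Corollary~\ref{cor:condIsoNullity0}, a necessary first check is that $\eta(H_k)=0$, after which it suffices to compute $A(H_k)^{-1}$, booleanise it, and compare the result with $A(H_k)$ under a suitable vertex relabelling.

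Mirroring the strategy of Theorem~\ref{th:coronaGraphs}, I would look for a family whose adjacency matrix admits a block decomposition of the form
\[
A(H_k)=\begin{bmatrix} A & B \\ B^T & C \end{bmatrix},
\]
in which the blocks interact nicely (for instance $A$ and $B$ commute, or $A,B,C$ are simultaneously diagonalisable by a discrete Fourier transform). A natural guess is to take two copies of a $2k$-cycle and glue them by a carefully chosen permutation matching so that every vertex has degree $2$ or $3$, with exactly the extra edges needed to destroy the singularity of $C_{2k}$ when $k$ is even. The underlying block-circulant structure then reduces the inversion of $A(H_k)$ to inverting a small collection of $2\times 2$ blocks indexed by the characters of the cyclic group, producing a closed form for $A(H_k)^{-1}$.

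With the inverse in hand, I would read off the support of $A(H_k)^{-1}$ entry by entry and exhibit an explicit vertex permutation $\pi$ such that $P_\pi\,\mathrm{booleanise}(A(H_k)^{-1})\,P_\pi^T=A(H_k)$. In analogy with the block-swap isomorphism used in Theorem~\ref{th:coronaGraphs}, a natural candidate for $\pi$ is a rotation of each cycle followed by a swap of the two cycles; the verification then amounts to tracking where non-zero entries end up under $\pi$ and checking they coincide with the edge set of $H_k$.

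The main obstacle will be engineering the family so that $A(H_k)^{-1}$ has exactly the right sparsity pattern to be booleanisable back to an isomorphic copy of $H_k$: a generic sparse adjacency matrix has a dense inverse, so achieving the required agreement demands tight structural control, which is why high symmetry (block-circulant or similar) is essential. A secondary difficulty is uniformity in $k$: the parity of $k$ typically governs the spectrum of cycle-based graphs (for instance $C_{2k}$ is singular precisely when $k$ is even), so the explicit construction may need to be adjusted slightly to cover all $k\geq 2$ with a single scheme, or split into two parallel sub-constructions whose arguments are essentially the same.
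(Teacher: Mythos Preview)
Your high-level strategy---write the adjacency matrix in $2\times 2$ block form, compute the inverse by block manipulation, booleanise, and then exhibit an explicit vertex permutation realising the isomorphism---is precisely the route the paper takes. But what you have submitted is a plan, not a proof. The entire content of the theorem is the existence of a specific family with the required properties; you have not actually exhibited one, computed any inverse, or verified any isomorphism. Phrases such as ``I would look for a family whose blocks interact nicely'' and ``the main obstacle will be engineering the family'' correctly identify where the work lies, but they do not do that work.

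Your one concrete guess---two copies of $C_{2k}$ joined by a permutation matching---is moreover not the construction that succeeds. The paper's graph is a single $2k$-cycle with a pendant attached to every vertex (so far this is the radialene of Theorem~\ref{th:coronaGraphs}) together with a perfect matching on the $2k$ pendant vertices; the cycle vertices then have degree $3$ and the pendant vertices degree $2$. In block form,
\[
A=\begin{bmatrix} f(2k,1)+f(2k,-1) & f(2k,0) \\ f(2k,0) & f(2k,1) \end{bmatrix},
\]
where the $f(2k,a)$ are an explicit one-parameter family of permutation-type matrices that is closed under transpose and multiplication. The inverse is computed directly from these closure rules (no Fourier diagonalisation is needed) and again has all four blocks of the form $\pm f(2k,\cdot)$ or a sum of two such, so booleanising and applying an explicit permutation recovers $A$. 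No parity split in $k$ is required; the construction is uniform for all $k\geq 2$.
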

\begin{proof}
For a positive integer $n$ and an integer $a$, we define the matrix $f(n,a)$ as an $n$ by $n$ matrix in which every entry is equal to 0, except one entry per row which is equal to 1: $f(n,a)_{i,i+a}=1$ if $i$ is even and $f(n,a)_{i,i-a}=1$ if $i$ is odd. Here, indices should be interpreted modulo $n$ and the first row (and column) has index 0. For example, the identity matrix with $n$ rows is given by $f(n,0)$. For all integers $k \geq 1$ and $a$, we have:
\begin{align*}
    f(2k,a)^T=\begin{cases}
        f(2k,a) & \textrm{ if }a\textrm{ is odd}, \\
        f(2k,-a) & \textrm{ if }a\textrm{ is even}. \\
    \end{cases}
\end{align*}
Moreover, for all integers $k \geq 1$ and $a, b$, we have:
\begin{align*}
    f(2k,a)f(2k,b)=\begin{cases}
        f(2k,a-b) & \textrm{ if }a\textrm{ is odd}, \\
        f(2k,a+b) & \textrm{ if }a\textrm{ is even}. \\
    \end{cases}
\end{align*}
Let $k \geq 2$ be an integer and let $G$ be the graph with the following adjacency matrix:
    \[
\left[
\renewcommand{\arraystretch}{1.5}
\begin{array}{c|c}
f(2k,1)+f(2k,-1) & f(2k,0) \\ \hline
f(2k,0) & f(2k,1)
\end{array}
\right].
\]
Note that this matrix is indeed symmetric since $f(2k,1)$ and $f(2k,-1)$ are symmetric. By using the definition of $f$ and blockwise matrix multiplication, one can verify that the inverse of this matrix is given by:
    \[
\left[
\renewcommand{\arraystretch}{1.5}
\begin{array}{c|c}
f(2k,-1) & -f(2k,-2) \\ \hline
-f(2k,2) & f(2k,1)+f(2k,3)
\end{array}
\right].
\]
The adjacency matrix of the conduction graph $G\sp{\mathrm C}$ of $G$ is obtained by booleanising the above matrix due to Theorem~\ref{th:booleanise}. This graph and its conduction graph are shown in Fig.~\ref{fig:tFamily} for $k=4$. 
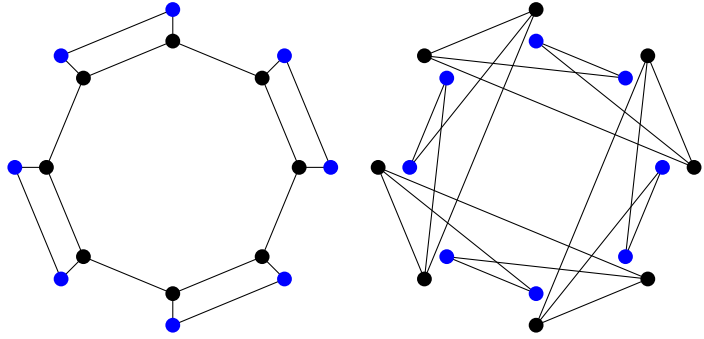
\begin{figure}[h!]
\begin{center}
\begin{tikzpicture}[scale=1.4]
  \def\k{4}
  \pgfmathsetmacro{\sides}{2*\k}
  \def\radius{0.75}

  \foreach \i in {1,...,\sides} {
    \draw ({360/\sides * (\i + 1)}:1.6*\radius) -- ({360/\sides * \i}:1.6*\radius);
  }

  \foreach \i in {1,...,\k} {
    \draw ({360/\sides * (2*\i + 1)}:2*\radius) -- ({360/\sides * (2*\i)}:2*\radius);
  }
  
  \foreach \i in {1,...,\sides} {
    \draw ({360/\sides * (\i)}:1.6*\radius) -- ({360/\sides * \i}:2*\radius);
  }

    \foreach \i in {1,...,\sides} {
    \fill ({360/\sides * \i}:1.6*\radius) circle (2pt);
  }

  \foreach \i in {1,...,\sides} {
    \fill[blue] ({360/\sides * \i}:2*\radius) circle (2pt);
  }
\end{tikzpicture} \quad
\begin{tikzpicture}[scale=1.4]
  \def\k{4}
  \pgfmathsetmacro{\sides}{2*\k}
  \def\radius{0.75}

  \foreach \i in {1,...,\k} {
    \draw ({360/\sides * (2*\i + 1)}:2*\radius) -- ({360/\sides * (2*\i)}:2*\radius);
  }

  \foreach \i in {1,...,\k} {
    \draw ({360/\sides * (2*\i + 3)}:2*\radius) -- ({360/\sides * (2*\i)}:2*\radius);
  }
  
  \foreach \i in {1,...,\k} {
    \draw ({360/\sides * (2*\i)}:2*\radius) -- ({360/\sides * (2*\i+2)}:1.6*\radius);
  }

  \foreach \i in {1,...,\k} {
    \draw ({360/\sides * (2*\i-1)}:2*\radius) -- ({360/\sides * (2*\i-3)}:1.6*\radius);
  }

  \foreach \i in {1,...,\k} {
    \draw ({360/\sides * (2*\i + 2)}:1.6*\radius) -- ({360/\sides * (2*\i+1)}:1.6*\radius);
  }

  \foreach \i in {1,...,\sides} {
    \fill[blue]  ({360/\sides * \i}:1.6*\radius) circle (2pt);
  }

  \foreach \i in {1,...,\sides} {
    \fill ({360/\sides * \i}:2*\radius) circle (2pt);
  }
  
\end{tikzpicture}
\end{center}
\caption{A conduction-isomorphic graph with minimum degree two obtained by choosing $k=4$ (left) and its conduction graph (right).}\label{fig:tFamily}
\end{figure}
Finally, we conclude that $G$ is indeed isomorphic with its conduction graph $G\sp{\mathrm C}$ where the isomorphism $h : V(G) \rightarrow V(G\sp{\mathrm C})$ between $G$ and $G\sp{\mathrm C}$ is given by:
\begin{align*}
    h(u)=\begin{cases}
        u+2k & \textrm{ if }u\textrm{ is even and }0 \leq u \leq 2k-1, \\
        ((u+2) \mod 2k)+2k & \textrm{ if }u\textrm{ is odd and }0 \leq u \leq 2k-1, \\
        ((u+2) \mod 2k) & \textrm{ if }u\textrm{ is even and }2k \leq u \leq 4k-1, \\
        u-2k & \textrm{ if }u\textrm{ is odd and }2k \leq u \leq 4k-1.\\
    \end{cases}
\end{align*}
\end{proof}

\subsection{Conduction-isomorphic graphs with arbitrarily large minimum degree}
We now present an infinite family of conduction-isomorphic graphs with arbitrarily large minimum degree:

\begin{theorem}
\label{th:largeMinDegree}
For each integer $k \geq 3$, there exists a conduction-isomorphic graph on $2k$ vertices with minimum degree equal to $2k-5$.
\end{theorem}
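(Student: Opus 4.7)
The plan is to follow the blueprint established in Theorems~\ref{th:coronaGraphs} and~\ref{th:minDegTwo}. For each $k \geq 3$ I would write down an explicit family $\{G_k\}$ of graphs on $2k$ vertices by specifying an adjacency matrix $A(G_k)$ in block form; verify that every row sum is at least $2k-5$ with equality attained somewhere; compute $A(G_k)^{-1}$ using blockwise matrix identities; apply Theorem~\ref{th:booleanise} to read off $A(G_k^{\mathrm{C}}) = \mathrm{booleanise}(A(G_k)^{-1})$; and finally exhibit an explicit vertex permutation realising the isomorphism $G_k \cong G_k^{\mathrm{C}}$.

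The most natural candidate is $G_k = K_{2k} - H_k$, where $H_k$ is a carefully chosen $4$-regular graph on $2k$ vertices, so that $G_k$ is $(2k-5)$-regular. Since $H_k$ is regular, $A(H_k)$ commutes with $J_{2k}$, and $A(G_k) = J_{2k} - I_{2k} - A(H_k)$ can be inverted by splitting its action into the all-ones eigenspace and its orthogonal complement, producing a closed form for $A(G_k)^{-1}$ in terms of $J_{2k}$, $I_{2k}$, and a polynomial in $A(H_k)$. A concrete choice to try first is the circulant $H_k = C_{2k}(1,2)$, since then both $G_k$ and $G_k^{\mathrm{C}}$ are circulants and the sought isomorphism reduces to a cyclic shift; an alternative is a $2\times 2$ block construction whose blocks are built from the $f(k,a)$ matrices from Theorem~\ref{th:minDegTwo}, which would bring the inversion step into the same algebraic framework already exploited there.

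The main obstacle is ensuring that the zero/non-zero pattern of $A(G_k)^{-1}$, after booleanisation, matches the adjacency matrix of $G_k$ up to a permutation of rows and columns. This is a delicate combinatorial condition that depends on fine cancellations among the Fourier coefficients of $A(H_k)$: a ``wrong'' choice of $H_k$ yields an inverse whose non-zero pattern describes a different $(2k-5)$-regular graph and therefore a conduction graph nonisomorphic to $G_k$. A secondary subtlety is the boundary case $k = 3$: on $6$ vertices the complement of any $4$-regular graph is a perfect matching, hence disconnected, so the family must either be defined piecewise (using a corona-type graph from Theorem~\ref{th:coronaGraphs}, e.g.\ $K_3 \odot K_1$, for $k = 3$ and a denser construction for $k \geq 4$) or be based on a construction in which $G_k$ is not forced to be regular, with some vertices of degree higher than $2k-5$ restoring connectivity when $k$ is small.

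Once the construction is in place, the verification of isomorphism should proceed exactly as in Theorem~\ref{th:minDegTwo}: pick the vertex permutation $h$ dictated by the symmetries of the chosen block structure (a cyclic shift in the circulant case, or an involution swapping the two blocks with an internal twist in the block-matrix case), and check in a few lines of block-matrix algebra that $h$ intertwines $A(G_k)$ with $\mathrm{booleanise}(A(G_k)^{-1})$. The degree count then gives the claimed minimum degree $2k-5$, completing the argument.
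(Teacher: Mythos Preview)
Your primary approach of taking $G_k$ to be $(2k-5)$-regular cannot succeed, and not only at the boundary $k=3$ you flagged. For $k=4$ the graph would be $3$-regular, and the theorem of Section~\ref{sec:no3Regular} proves that no simple connected $3$-regular graph is conduction-isomorphic. For $k \geq 5$ you would be producing $d$-regular conduction-isomorphic graphs with $d \geq 5$; the existence of \emph{any} $d$-regular conduction-isomorphic graph with $d>3$ is left as an open question later in the paper, so your circulant-complement route would have to settle that question along the way. The regular ansatz is therefore not a viable path to this theorem, and your proposal contains no concrete non-regular candidate.

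The paper takes what you list only as a fallback and makes it the actual construction. The $2k$ vertices are split into two blocks of size $k$, with adjacency matrix
\[
\renewcommand{\arraystretch}{1.5}
\left[\begin{array}{c|c} J-I & J-I-P(k) \\ \hline J-I-P^{-1}(k) & J-I-P^{-1}(k)-P(k) \end{array}\right],
\]
where $P(k)$ is the cyclic shift matrix. Vertices in the first block have degree $2k-3$ and those in the second have degree $2k-5$, so the minimum degree is exactly $2k-5$ while the graph is connected for every $k \geq 3$ with no separate boundary case. The inverse comes out cleanly from blockwise multiplication using $P(k)P^{-1}(k)=I_k$, and after booleanisation the isomorphism swaps the two blocks (with a unit cyclic shift on one of them). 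Your overall template---block matrix, explicit inverse, explicit permutation---is exactly right; the missing ingredient is committing from the outset to this asymmetric two-block family rather than to a regular complement of a circulant.
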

\begin{proof}
    For a positive integer $n$, we define the cyclic permutation matrix $P(n)$ as an $n$ by $n$ matrix in which every entry is equal to 0, except one entry per row which is equal to 1: $P(n)_{i,i-1}=1$ for all $0 \leq i \leq n-1$. Here, indices should be interpreted modulo $n$ and the first row (and column) has index 0. Analogously, the inverse cyclic permutation matrix $P^{-1}(n)$ is an $n$ by $n$ matrix in which every entry is equal to 0, except one entry per row which is equal to 1: $P^{-1}(n)_{i,i+1}=1$.
    
    Let $k \geq 3$ be an integer and let $G$ be the graph with adjacency matrix:
        \[
\left[
\renewcommand{\arraystretch}{1.5}
\begin{array}{c|c}
J-I& J-I-P(k) \\ \hline
J-I-P^{-1}(k) & J-I-P^{-1}(k)-P(k)
\end{array}
\right].
\]
Note that the vertices $0 \leq u \leq k-1$ have degree $2k-3$ and the other vertices have degree $2k-5$. The graph obtained by choosing $k=4$ as well as its conduction graph are shown in Fig.~\ref{fig:largeMinDegree}. 
\begin{figure}[h!]
\begin{center}
\begin{tikzpicture}[scale=1.4]
  \def\k{4}
  \pgfmathsetmacro{\sides}{\k}
  \def\radius{0.75}
  
  \foreach \i in {1,...,\sides} {
    \pgfmathsetmacro{\start}{\i+1}
    \foreach \j in {\start,...,\sides} {
        \draw ({360/\sides * (\i + 0.5)}:1.1*\radius) -- ({360/\sides * (\j+0.5)}:1.1*\radius);
        }
    }

    \draw ({360/\sides * (0)}:2.0*\radius) -- ({360/\sides * (2)}:2.0*\radius);
    \draw ({360/\sides * (1)}:2.0*\radius) -- ({360/\sides * (3)}:2.0*\radius);

    \draw ({360/\sides * (0)}:2.0*\radius) -- ({360/\sides * (2+0.5)}:1.1*\radius);
    \draw ({360/\sides * (0)}:2.0*\radius) -- ({360/\sides * (1+0.5)}:1.1*\radius);

    \draw ({360/\sides * (1)}:2.0*\radius) -- ({360/\sides * (3+0.5)}:1.1*\radius);
    \draw ({360/\sides * (1)}:2.0*\radius) -- ({360/\sides * (2+0.5)}:1.1*\radius);

    \draw ({360/\sides * (2)}:2.0*\radius) -- ({360/\sides * (4+0.5)}:1.1*\radius);
    \draw ({360/\sides * (2)}:2.0*\radius) -- ({360/\sides * (3+0.5)}:1.1*\radius);

    \draw ({360/\sides * (3)}:2.0*\radius) -- ({360/\sides * (5+0.5)}:1.1*\radius);
    \draw ({360/\sides * (3)}:2.0*\radius) -- ({360/\sides * (4+0.5)}:1.1*\radius);

      \foreach \i in {1,...,\sides} {
    \fill ({360/\sides * (\i+0.5) }:1.1*\radius) circle (2pt);
  }

  \foreach \i in {1,...,\sides} {
    \fill[blue] ({360/\sides * \i}:2*\radius) circle (2pt);
  }
  
\end{tikzpicture} \quad
\begin{tikzpicture}[scale=1.4]
  \def\k{4}
  \pgfmathsetmacro{\sides}{\k}
  \def\radius{0.75}
  
  \foreach \i in {1,...,\sides} {
    \pgfmathsetmacro{\start}{\i+1}
    \foreach \j in {\start,...,\sides} {
        \draw ({360/\sides * (\i)}:2.0*\radius) -- ({360/\sides * (\j)}:2.0*\radius);
        }
    }

    \draw ({360/\sides * (0+0.5)}:1.1*\radius) -- ({360/\sides * (2+0.5)}:1.1*\radius);
    \draw ({360/\sides * (1+0.5)}:1.1*\radius) -- ({360/\sides * (3+0.5)}:1.1*\radius);

    \draw ({360/\sides * (0)}:2.0*\radius) -- ({360/\sides * (2+0.5)}:1.1*\radius);
    \draw ({360/\sides * (0)}:2.0*\radius) -- ({360/\sides * (1+0.5)}:1.1*\radius);

    \draw ({360/\sides * (1)}:2.0*\radius) -- ({360/\sides * (3+0.5)}:1.1*\radius);
    \draw ({360/\sides * (1)}:2.0*\radius) -- ({360/\sides * (2+0.5)}:1.1*\radius);

    \draw ({360/\sides * (2)}:2.0*\radius) -- ({360/\sides * (4+0.5)}:1.1*\radius);
    \draw ({360/\sides * (2)}:2.0*\radius) -- ({360/\sides * (3+0.5)}:1.1*\radius);

    \draw ({360/\sides * (3)}:2.0*\radius) -- ({360/\sides * (5+0.5)}:1.1*\radius);
    \draw ({360/\sides * (3)}:2.0*\radius) -- ({360/\sides * (4+0.5)}:1.1*\radius);

      \foreach \i in {1,...,\sides} {
    \fill[blue]  ({360/\sides * (\i+0.5) }:1.1*\radius) circle (2pt);
  }

  \foreach \i in {1,...,\sides} {
    \fill ({360/\sides * \i}:2*\radius) circle (2pt);
  }
  
\end{tikzpicture}
\end{center}
\caption{A conduction-isomorphic graph with minimum degree three obtained by choosing $k=4$ (left) and its conduction graph (right).}\label{fig:largeMinDegree}
\end{figure}
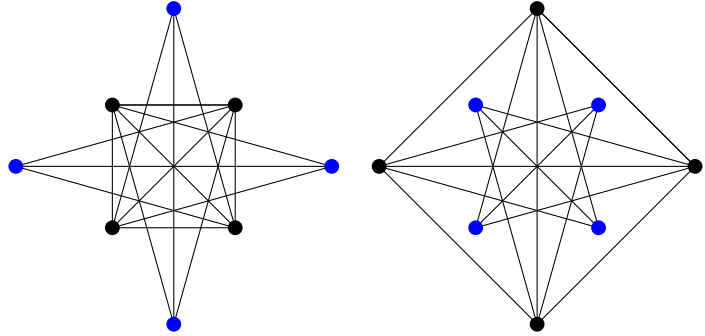

By observing that $P(n)P^{-1}(n)=I_n$ and using blockwise matrix multiplication, we obtain the following inverse matrix:
        \[
\left[
\renewcommand{\arraystretch}{1.5}
\begin{array}{c|c}
-(J-I-P^{-1}(k)-P(k))& J-I-P(k) \\ \hline
J-I-P^{-1}(k) & -(J-I)
\end{array}
\right].
\]
From Theorem~\ref{th:booleanise}, booleanising the above matrix gives us the adjacency matrix of the conduction graph $G\sp{\mathrm C}$ of $G$. We conclude the proof by giving the isomorphism $h : V(G) \rightarrow V(G\sp{\mathrm C})$ between $G$ and $G\sp{\mathrm C}$, which shows that $G$ is indeed conduction-isomorphic:
\begin{align*}
    h(u)=\begin{cases}
        k+((u-1) \mod k)& \textrm{for } 0 \leq u \leq k-1, \\
        u-k & \textrm{for } k \leq u \leq 2k-1. \\
    \end{cases}
\end{align*}
\end{proof}

\section{Constructing conduction-isomorphic graphs from other conduction-isomorphic graphs}\label{sec:condIsoFromCondIso}
Theorem~\ref{th:coronaGraphs} results in infinitely many bipartite and non-bipartite conduction-isomorphic graphs by choosing a suitable base graph and Theorem~\ref{th:largeMinDegree} results in another infinite family of non-bipartite conduction-isomorphic graphs. In the current section, we present another infinite family of such bipartite conduction-isomorphic graphs based on the canonical double cover of a suitable conduction-isomorphic graph. We first recall two well-known observations about such a canonical double cover. The first observation follows directly from the definition of a canonical double cover.
\begin{observation}\label{obs:CDCBipartite}
Let $G$ be any graph. The canonical double cover of $G$ is bipartite.
\end{observation}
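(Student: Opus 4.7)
The plan is to directly exhibit a bipartition of the vertex set of the canonical double cover that witnesses bipartiteness. Since a graph is bipartite if and only if its vertex set admits a partition into two independent sets such that every edge has one endpoint in each part, producing such a partition will immediately finish the proof.

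The natural candidate is to partition $V'$ by the value of the second coordinate: let $V_0 = V \times \{0\}$ and $V_1 = V \times \{1\}$. These are clearly disjoint and their union is $V'$, so it remains only to check that no edge of the canonical double cover joins two vertices of $V_0$ or two vertices of $V_1$. This is immediate from the definition: every edge in $E'$ has the form $(u_1, j)(u_2, 1-j)$ for some $u_1u_2 \in E$ and $j \in \{0,1\}$, so its two endpoints always have distinct second coordinates, placing one in $V_0$ and the other in $V_1$.

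The main (and essentially only) obstacle is recognising that there is nothing to prove beyond unpacking the definition; the bipartition is wired into the construction itself. Accordingly, I would write the argument as a single short paragraph exhibiting the partition $\{V_0, V_1\}$ and citing the defining form of the edges, rather than as a longer structured proof. This matches the author's own signal that the observation \emph{follows directly from the definition}.
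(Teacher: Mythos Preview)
Your proposal is correct and matches the paper's treatment: the paper gives no formal proof and simply remarks that the observation follows directly from the definition of a canonical double cover, which is exactly what your bipartition $V_0 = V \times \{0\}$, $V_1 = V \times \{1\}$ makes explicit.
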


The second observation indicates precisely when the canonical double cover of a graph is connected.
\begin{observation}[\cite{BHM80}]\label{obs:CDCConnected}
Let $G$ be any graph. The canonical double cover of $G$ is connected if and only if $G$ is non-bipartite and connected.    
\end{observation}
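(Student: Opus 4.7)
The plan is to verify the biconditional directly, using the equivalent description of the canonical double cover $G'$ as the tensor product $G \times K_2$: each edge $uv$ of $G$ lifts to exactly the two edges $(u,0)(v,1)$ and $(u,1)(v,0)$ in $G'$, and every walk in $G'$ projects down to a walk in $G$ while its level coordinate is flipped by every step. Both directions of the observation flow from this single picture.

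For the forward implication, I would argue the contrapositive in two cases. If $G$ is disconnected, the edge-lifting description shows that $G'$ is the disjoint union of the canonical double covers of the components of $G$, and so is itself disconnected. If $G$ is bipartite with bipartition $(A,B)$, I would define the two vertex sets $S_0 := (A \times \{0\}) \cup (B \times \{1\})$ and $S_1 := (A \times \{1\}) \cup (B \times \{0\})$ and check that every edge of $G'$ lies entirely in one of them: an edge $(u,j)(v,1-j)$ with $uv \in E(G)$, $u \in A$, $v \in B$ has both endpoints in $S_j$. Since both $S_0$ and $S_1$ are nonempty as soon as $G$ has an edge (and $G'$ is disconnected on edgeless graphs in any case), $G'$ is disconnected.

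For the backward implication, assume $G$ is connected and non-bipartite. Two reductions will suffice: because every edge $uv$ of $G$ gives rise to the edge $(u,0)(v,1)$ in $G'$, once I know that $(u,0)$ and $(u,1)$ lie in the same component of $G'$ for each vertex $u$, connectedness of $G$ immediately transports this to connectedness of $G'$. To establish the within-fibre connectedness, I would use the fact that a walk of length $\ell$ in $G$ starting at $u$ lifts to a walk in $G'$ starting at $(u,0)$ and ending at level $\ell \bmod 2$. Thus it suffices to produce an odd closed walk based at $u$.

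The only real obstacle is the existence of such an odd closed walk at an \emph{arbitrary} vertex of a connected non-bipartite graph. I would dispose of this in one line: non-bipartiteness of $G$ guarantees an odd cycle $C$, and for any vertex $u$ a path $P$ from $u$ to some vertex of $C$ (existing by connectedness) combined with a full traversal of $C$ followed by the reverse of $P$ yields a closed walk through $u$ of length $2|P| + |C|$, which is odd. This completes the backward direction and hence the proof.
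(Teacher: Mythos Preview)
Your proof is correct. Note, however, that the paper does not actually prove this observation: it is quoted from~\cite{BHM80} as a known fact and used as a black box in the proof of Theorem~\ref{th:canonicalDoubleCover}, so there is no ``paper's own proof'' to compare against. Your argument via lifted walks and the odd-closed-walk trick is the standard one and would serve perfectly well as a self-contained justification were one desired.
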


We now obtain our second infinite family by applying the following theorem to infinitely many connected non-bipartite conduction-isomorphic graphs (e.g.~as obtained from Theorem~\ref{th:largeMinDegree}):
\begin{theorem}
\label{th:canonicalDoubleCover}
Let $G$ be a connected non-bipartite conduction-isomorphic graph. The canonical double cover of $G$ is a connected bipartite conduction-isomorphic graph.
\end{theorem}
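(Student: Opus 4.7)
The plan is to write the adjacency matrix of the canonical double cover $G'$ in block form and then reduce the claim $G' \cong (G')\sp{\mathrm C}$ to the existing isomorphism $G \cong G\sp{\mathrm C}$. By Corollary~\ref{cor:condIsoNullity0}, $G$ has nullity $0$, so $A := A(G)$ is invertible. Directly from the definition of the canonical double cover,
\[
A(G') = \left[
\begin{array}{c|c} \mathbf{0} & A \\ \hline A & \mathbf{0} \end{array}
\right],
\]
and a routine block multiplication verifies
\[
A(G')^{-1} = \left[
\begin{array}{c|c} \mathbf{0} & A^{-1} \\ \hline A^{-1} & \mathbf{0} \end{array}
\right].
\]

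I would then apply Theorem~\ref{th:booleanise} to read off $A((G')\sp{\mathrm C})$ as $booleanise(A(G')^{-1})$. The key observation is that the diagonal of $A(G')^{-1}$ sits entirely inside the two zero diagonal blocks and is therefore identically zero, so booleanising produces no loops. Moreover, since $G$ is simple and conduction-isomorphic to $G\sp{\mathrm C}$, the graph $G\sp{\mathrm C}$ is loopless; combined with Theorem~\ref{th:booleanise} applied to $G$ itself, this forces the diagonal of $A^{-1}$ to be zero as well, so booleanising the off-diagonal blocks of $A(G')^{-1}$ yields precisely $A(G\sp{\mathrm C})$. In short,
\[
A((G')\sp{\mathrm C}) = \left[
\begin{array}{c|c} \mathbf{0} & A(G\sp{\mathrm C}) \\ \hline A(G\sp{\mathrm C}) & \mathbf{0} \end{array}
\right].
\]

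Finally, let $P$ be a permutation matrix realising $G \cong G\sp{\mathrm C}$ (so $P^T A P = A(G\sp{\mathrm C})$). The block-diagonal permutation $Q := \mathrm{diag}(P,P)$ then satisfies $Q^T A(G') Q = A((G')\sp{\mathrm C})$, which gives the isomorphism $G' \cong (G')\sp{\mathrm C}$ explicitly via $(u,j) \mapsto (h(u), j)$, where $h$ is the underlying vertex bijection of $P$. Connectedness and bipartiteness of $G'$ follow immediately from Observations~\ref{obs:CDCBipartite} and~\ref{obs:CDCConnected} together with the hypothesis that $G$ is connected and non-bipartite. The only subtle point I anticipate is the booleanisation bookkeeping, in particular verifying that both relevant diagonals vanish so that the booleanise operator commutes with the block decomposition; once that is established, the remainder is straightforward block algebra.
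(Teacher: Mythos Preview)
Your proof is correct and follows essentially the same route as the paper: block form of $A(G')$, block inverse, booleanisation via Theorem~\ref{th:booleanise}, and the isomorphism $(u,j)\mapsto(h(u),j)$, together with Observations~\ref{obs:CDCBipartite} and~\ref{obs:CDCConnected}. If anything, you are more careful than the paper in checking that the diagonal of $A^{-1}$ vanishes so that booleanisation commutes with the block decomposition; the paper leaves this implicit.
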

\begin{proof}

Let $H$ be the canonical double cover of $G$ with vertex set $V(H) = V(G) \times \{0,1\}$ and edge set $E(H) = \{(u_1,j)(u_2,1-j)~|~u_1u_2 \in E(G), j \in \{0,1\}\}$. Because of Observations~\ref{obs:CDCBipartite} and~\ref{obs:CDCConnected}, $H$ is indeed a connected bipartite graph. Let $A$ be the adjacency matrix of $G$ and $h' : V(G) \rightarrow V(G\sp{\mathrm C})$ be an isomorphism between $G$ and its conduction graph $G\sp{\mathrm C}$. The graph $H$ has the following adjacency matrix:
    \[
\left[
\renewcommand{\arraystretch}{1.5}
\begin{array}{c|c}
\mathbf{0} & A \\ \hline
A^T & \mathbf{0}
\end{array}
\right].
\]

Using blockwise matrix multiplication and the fact that $A=A^T$, we obtain the following inverse:
    \[
\left[
\renewcommand{\arraystretch}{1.5}
\begin{array}{c|c}
\mathbf{0} & (A^T)^{-1} \\ \hline
A^{-1} & \mathbf{0}
\end{array}
\right].
\]
Because of Theorem~\ref{th:booleanise}, the conduction graph $H^C$ of $H$ has an adjacency matrix which is obtained by booleanising the above inverse. As an example, Fig.~\ref{fig:nonBipartite} shows the smallest connected non-bipartite conduction-isomorphic graph as well as its canonical double cover.

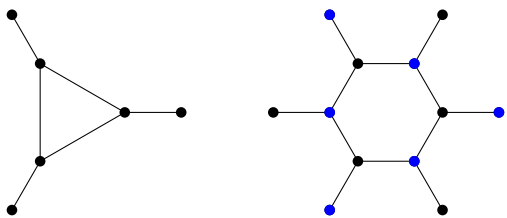
\begin{figure}[h!]
\begin{center}
\begin{tikzpicture}[scale=1.0]
  \def\sides{3}
  \def\radius{0.75}
  
  \foreach \i in {1,...,\sides} {
    \draw ({360/\sides * (\i + 1)}:\radius) -- ({360/\sides * \i}:\radius);
  }

  \foreach \i in {1,...,\sides} {
    \draw ({360/\sides * (\i)}:\radius) -- ({360/\sides * \i}:2*\radius);
  }

  \foreach \i in {1,...,\sides} {
    \fill ({360/\sides * \i}:\radius) circle (2pt);
  }

  \foreach \i in {1,...,\sides} {
    \fill[black] ({360/\sides * \i}:2*\radius) circle (2pt);
  }
  
\end{tikzpicture} \quad\quad\quad
\begin{tikzpicture}[scale=1.0]
  \def\sides{6}
  \def\radius{0.75}
  
  \foreach \i in {1,...,\sides} {
    \draw ({360/\sides * (\i + 1)}:\radius) -- ({360/\sides * \i}:\radius);
  }

  \foreach \i in {1,...,\sides} {
    \draw ({360/\sides * (\i)}:\radius) -- ({360/\sides * \i}:2*\radius);
  }

  \foreach \i in {1,...,\sides} {
    \fill ({360/\sides * \i}:\radius) circle (2pt);
  }
  \foreach \i in {1,3,...,\sides} {
    \fill[blue] ({360/\sides * \i}:\radius) circle (2pt);
  }
  
  \foreach \i in {1,...,\sides} {
    \fill[black] ({360/\sides * \i}:2*\radius) circle (2pt);
  }

  \foreach \i in {2,4,...,\sides} {
    \fill[blue] ({360/\sides * \i}:2*\radius) circle (2pt);
  }
  
\end{tikzpicture}
\end{center}
\caption{The smallest connected non-bipartite conduction-isomorphic graph (left) and its canonical double cover (right).}\label{fig:nonBipartite}
\end{figure}

Now $H$ is conduction-isomorphic where the isomorphism $h : V(H) \rightarrow V(H^\mathrm{C})$ between $H$ and $H^\mathrm{C}$ is given by $h((u,j))=(h'(u),j)$ (for each $u \in V(G)$, $j \in \{0,1\}$).
\end{proof}

We remark that the canonical double cover of a graph $G$ has the same degree set as $G$ itself, and therefore the canonical double cover of a non-bipartite chemical conduction-isomorphic graph is again a chemical conduction-isomorphic graph.

\section{Nonexistence of 3-regular conduction-isomorphic graphs}
\label{sec:no3Regular}
Among others, the results of Section~\ref{sec:infFam} imply the existence of infinitely many chemical conduction-isomorphic graphs with minimum degree 1 and 2. In the current section we show that there are no chemical conduction-isomorphic graphs with minimum degree 3, i.e.\ simple connected 3-regular conduction-isomorphic graphs. This rules out the existence of conduction-isomorphic graphs among a number of important chemical graph families (e.g.\ fullerenes and other carbon cage molecules based on cubic polyhedra).

\begin{theorem}
    Let $G$ be a simple connected 3-regular graph with nullity 0 and having the conduction graph $G\sp{\mathrm C}$. Then every vertex $v \in V(G\sp{\mathrm C})$ has at least 4 neighbours. As a result, $G$ is not conduction-isomorphic.
\end{theorem}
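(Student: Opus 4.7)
The plan is to apply Theorem~\ref{th:booleanise} and recast the claim linear-algebraically. Since $\eta(G)=0$, the matrix $A:=A(G)$ is invertible and $A(G^{\mathrm{C}})=\mathrm{booleanise}(A^{-1})$, so the neighbours of a vertex $c$ in $G^{\mathrm{C}}$ (counting $c$ itself if $G^{\mathrm{C}}$ carries a loop there) are in bijection with the non-zero entries of the column $x:=A^{-1}e_c$. The theorem therefore reduces to showing that every column of $A^{-1}$ has at least four non-zero entries, and the final clause is then immediate because every vertex of the simple $3$-regular graph $G$ has exactly three neighbours.

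All arguments proceed from one identity. Reading off $Ax=e_c$ gives $\sum_{u\in N_G(v)} x_u = \delta_{vc}$ for every $v\in V(G)$. Let $S:=\mathrm{supp}(x)$ and suppose, for contradiction, that $|S|\leq 3$. For every $v\neq c$ the identity becomes $\sum_{s\in S\cap N_G(v)} x_s = 0$, yielding the \emph{rigidity rule}: no vertex $v\neq c$ can be adjacent in $G$ to exactly one vertex of $S$, because such an adjacency would force the corresponding $x_s$ to vanish, contradicting $s\in S$.

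The cases $|S|\in\{1,2\}$ are dispatched quickly. For $|S|=1$, $Ax$ is a non-zero multiple of a single column of $A$, which has three non-zero entries by $3$-regularity and hence cannot equal $e_c$. For $|S|=2$ with $S=\{a,b\}$, one splits on whether $c\in S$ and on whether $a\sim b$; applying the rigidity rule to every $v\in V(G)\setminus\{c\}$ together with the $3$-regular degree count at $a$ and $b$ and the normalisation $(Ax)_c=1$ forces $N_G(a)$ and $N_G(b)$ into configurations that are inconsistent in every subcase.

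The main obstacle is $|S|=3$, say $S=\{a,b,d\}$ with weights $\alpha,\beta,\gamma$ all non-zero. I would split on whether $c\in S$ and on the number $k$ of edges of $G$ inside $S$. If $c\notin S$, the rigidity rule applied at $a$, $b$, $d$ themselves immediately forces $k=0$; partitioning $V(G)\setminus S$ by the $S$-neighbourhood pattern of each vertex and using $3$-regularity of $a,b,d$ one obtains explicit class cardinalities for every value of $|N_G(c)\cap S|\in\{1,2,3\}$, and the induced linear system in $\alpha,\beta,\gamma$ together with the normalisation at $c$ is inconsistent in each subcase. If $c\in S$, say $c=a$, the rigidity rule at $b$ and $d$ restricts $k$ to $\{0,3\}$; the subcase $k=0$ fails at once because then $\sum_{s\in S\cap N_G(a)}x_s=0\neq 1$, while the triangle case $k=3$ forces $(\alpha,\beta,\gamma)=(-\tfrac12,\tfrac12,\tfrac12)$ and the three remaining neighbours of $a$, $b$, $d$ to coincide in a single vertex $u\in N_G(a)\cap N_G(b)\cap N_G(d)$, at which $\sum_{s\in S\cap N_G(u)}x_s=\alpha+\beta+\gamma=\tfrac12\neq 0$ supplies the final contradiction. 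The main difficulty lies not in any single calculation but in cleanly organising this bookkeeping; once the partition of $V(G)\setminus S$ by $S$-neighbourhood pattern is in place, each subcase is a short verification.
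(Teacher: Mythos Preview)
Your approach is essentially the paper's: both reduce to the observation that a column $x$ of $A^{-1}$ satisfies $Ax=e_c$ and then rule out $|\mathrm{supp}(x)|\leq 3$ by a finite case analysis on the resulting linear constraints. The paper packages the cases as six multisets of linear equations (one per possible pattern of how the rows of $A$ meet $\mathrm{supp}(x)$, using $3$-regularity to force each variable to appear in exactly three equations), while you organise the same information graph-theoretically (whether $c\in S$, the number of edges inside $S$, the $S$-neighbourhood of $c$); the two presentations unwind to the same handful of inconsistent systems. One small slip to fix when you write it out: for $|S|=3$ and $c\notin S$, rigidity at $a,b,d$ alone only yields $k\in\{0,3\}$, not $k=0$; the triangle case $k=3$ is then dispatched by the three zero-sum equations at $a,b,d$, which force $\alpha=\beta=\gamma=0$.
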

\begin{proof}
    Let $A$ be the adjacency matrix of $G$ and suppose for the sake of obtaining a contradiction that there exists a vertex $v \in V(G\sp{\mathrm C})$ which has strictly less than 4 neighbours. Because of Theorem~\ref{th:booleanise} this implies that there exists a column $c$ in $A^{-1}$ which has precisely $k<4$ non-zero elements $a_1, ..., a_k$. Since $AA^{-1}=I$, the product $Ac$ has to contain precisely one entry equal to 1 and all other entries equal to 0. This means that there exists a family $\mathcal{F}$ which is a multiset of subsets of $\{1,...,k\}$ such that there is precisely one subset $S \in \mathcal{F}$ for which $\sum_{i \in S}a_i=1$ and for all other subsets $S' \in \mathcal{F}$ we have $\sum_{i \in S'}a_i=0$. Moreover, each $i \in \{1,...,k\}$ appears in exactly 3 subsets of $\mathcal{F}$, since $G$ is 3-regular. Additionally, for $S' \in \mathcal{F}$ and $S' \neq S$, we have $|S'| \geq 2$, because the elements $a_1, ..., a_k$ are non-zero. Due to symmetry, we may assume without loss of generality that $S=\{1,...,l\}$ for some integer $1 \leq l \leq k$. Hence, there are only 6 such families $\mathcal{F}$ giving rise to the following sets of linear equations:

        \[
    \left\{
    \begin{aligned}
        a_1 + a_2 &= 1 \\
        a_1 + a_2 &= 0 \\
        a_1 + a_2 &= 0
    \end{aligned}
    \right.
    \quad
    \left\{
    \begin{aligned}
        a_1 &= 1 \\
        a_1 + a_2 &= 0 \\
        a_1 + a_3 &= 0 \\
        a_2 + a_3 &= 0 \\
        a_2 + a_3 &= 0 \\
    \end{aligned}
    \right.
    \quad
    \left\{
    \begin{aligned}
        a_1 &= 1 \\
        a_2 + a_3 &= 0 \\
        a_1 + a_2 + a_3 &= 0 \\
        a_1 + a_2 + a_3 &= 0 \\
    \end{aligned}
    \right.
    \]

            \[
    \left\{
    \begin{aligned}
        a_1 + a_2 &= 1 \\
        a_1 + a_3 &= 0 \\
        a_2 + a_3 &= 0 \\
        a_1 + a_2 + a_3 &= 0 \\
    \end{aligned}
    \right.
    \quad
    \left\{
    \begin{aligned}
        a_1 + a_2 + a_3 &= 1 \\
        a_1 + a_2 &= 0 \\
        a_1 + a_3 &= 0 \\
        a_2 + a_3 &= 0 \\
    \end{aligned}
    \right.
    \quad
    \left\{
    \begin{aligned}
        a_1 + a_2 + a_3 &= 1 \\
        a_1 + a_2 + a_3 &= 0 \\
        a_1 + a_2 + a_3 &= 0 \\
    \end{aligned}
    \right.
    \]
    
    None of these sets of linear equations has a solution and therefore we obtain a contradiction, as desired.
\end{proof}

We remark that the 3-regular case is different from the $d$-regular case for $d>3$. For example, Fig.~\ref{fig:6Reg} shows a 6-regular graph for which its conduction graph has a vertex of degree 6. However, this graph is not conduction-isomorphic and we were unable to find any such graphs. We therefore ask:
\begin{question}
    Does there exist a $d$-regular conduction-isomorphic graph for some integer $d>3$?
\end{question}


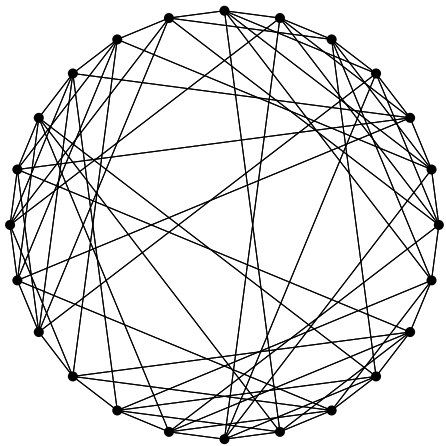
\begin{figure}[h!]
\begin{center}
\begin{tikzpicture}[scale=0.95]
  \def\sides{24}
  \def\radius{3}

  \foreach \i in {1,...,\sides} {
    \fill ({360/\sides * \i}:\radius) circle (2pt);
  }
  
  \foreach \i in {1,9,17} {
    \draw ({360/\sides * (\i + 1)}:\radius) -- ({360/\sides * \i}:\radius);
    \draw ({360/\sides * (\i + 3)}:\radius) -- ({360/\sides * \i}:\radius);
    \draw ({360/\sides * (\i + 5)}:\radius) -- ({360/\sides * \i}:\radius);
    \draw ({360/\sides * (\i + 7)}:\radius) -- ({360/\sides * \i}:\radius);
    \draw ({360/\sides * (\i + 17)}:\radius) -- ({360/\sides * \i}:\radius);
    \draw ({360/\sides * (\i + 23)}:\radius) -- ({360/\sides * \i}:\radius);
  }
    \foreach \i in {2,10,18} {
    \draw ({360/\sides * (\i + 1)}:\radius) -- ({360/\sides * \i}:\radius);
    \draw ({360/\sides * (\i + 3)}:\radius) -- ({360/\sides * \i}:\radius);
    \draw ({360/\sides * (\i + 7)}:\radius) -- ({360/\sides * \i}:\radius);
    \draw ({360/\sides * (\i + 9)}:\radius) -- ({360/\sides * \i}:\radius);
    \draw ({360/\sides * (\i + 11)}:\radius) -- ({360/\sides * \i}:\radius);
    \draw ({360/\sides * (\i + 23)}:\radius) -- ({360/\sides * \i}:\radius);
  }
  \foreach \i in {3,11,19} {
    \draw ({360/\sides * (\i + 1)}:\radius) -- ({360/\sides * \i}:\radius);
    \draw ({360/\sides * (\i + 3)}:\radius) -- ({360/\sides * \i}:\radius);
    \draw ({360/\sides * (\i + 11)}:\radius) -- ({360/\sides * \i}:\radius);
    \draw ({360/\sides * (\i + 15)}:\radius) -- ({360/\sides * \i}:\radius);
    \draw ({360/\sides * (\i + 21)}:\radius) -- ({360/\sides * \i}:\radius);
    \draw ({360/\sides * (\i + 23)}:\radius) -- ({360/\sides * \i}:\radius);
  }
  \foreach \i in {4,12,20} {
    \draw ({360/\sides * (\i + 1)}:\radius) -- ({360/\sides * \i}:\radius);
    \draw ({360/\sides * (\i + 3)}:\radius) -- ({360/\sides * \i}:\radius);
    \draw ({360/\sides * (\i + 17)}:\radius) -- ({360/\sides * \i}:\radius);
    \draw ({360/\sides * (\i + 19)}:\radius) -- ({360/\sides * \i}:\radius);
    \draw ({360/\sides * (\i + 21)}:\radius) -- ({360/\sides * \i}:\radius);
    \draw ({360/\sides * (\i + 23)}:\radius) -- ({360/\sides * \i}:\radius);
  }
  \foreach \i in {5,13,21} {
    \draw ({360/\sides * (\i + 1)}:\radius) -- ({360/\sides * \i}:\radius);
    \draw ({360/\sides * (\i + 7)}:\radius) -- ({360/\sides * \i}:\radius);
    \draw ({360/\sides * (\i + 13)}:\radius) -- ({360/\sides * \i}:\radius);
    \draw ({360/\sides * (\i + 19)}:\radius) -- ({360/\sides * \i}:\radius);
    \draw ({360/\sides * (\i + 21)}:\radius) -- ({360/\sides * \i}:\radius);
    \draw ({360/\sides * (\i + 23)}:\radius) -- ({360/\sides * \i}:\radius);
  }
  \foreach \i in {6,14,22} {
    \draw ({360/\sides * (\i + 1)}:\radius) -- ({360/\sides * \i}:\radius);
    \draw ({360/\sides * (\i + 13)}:\radius) -- ({360/\sides * \i}:\radius);
    \draw ({360/\sides * (\i + 17)}:\radius) -- ({360/\sides * \i}:\radius);
    \draw ({360/\sides * (\i + 19)}:\radius) -- ({360/\sides * \i}:\radius);
    \draw ({360/\sides * (\i + 21)}:\radius) -- ({360/\sides * \i}:\radius);
    \draw ({360/\sides * (\i + 23)}:\radius) -- ({360/\sides * \i}:\radius);
  }
  \foreach \i in {7,15,23} {
    \draw ({360/\sides * (\i + 1)}:\radius) -- ({360/\sides * \i}:\radius);
    \draw ({360/\sides * (\i + 5)}:\radius) -- ({360/\sides * \i}:\radius);
    \draw ({360/\sides * (\i + 7)}:\radius) -- ({360/\sides * \i}:\radius);
    \draw ({360/\sides * (\i + 17)}:\radius) -- ({360/\sides * \i}:\radius);
    \draw ({360/\sides * (\i + 21)}:\radius) -- ({360/\sides * \i}:\radius);
    \draw ({360/\sides * (\i + 23)}:\radius) -- ({360/\sides * \i}:\radius);
  }
  \foreach \i in {8,16,24} {
    \draw ({360/\sides * (\i + 1)}:\radius) -- ({360/\sides * \i}:\radius);
    \draw ({360/\sides * (\i + 3)}:\radius) -- ({360/\sides * \i}:\radius);
    \draw ({360/\sides * (\i + 5)}:\radius) -- ({360/\sides * \i}:\radius);
    \draw ({360/\sides * (\i + 7)}:\radius) -- ({360/\sides * \i}:\radius);
    \draw ({360/\sides * (\i + 17)}:\radius) -- ({360/\sides * \i}:\radius);
    \draw ({360/\sides * (\i + 23)}:\radius) -- ({360/\sides * \i}:\radius);
  }
  
\end{tikzpicture}
\end{center}
\caption{A 6-regular graph on 24 vertices whose conduction graph contains 18 vertices of degree 6.}\label{fig:6Reg}
\end{figure}
\section{Computational results}
\label{sec:computationalResults}
We used the program \texttt{nauty}~\cite{MP14} for generating all pairwise non-isomorphic connected graphs up to 11 vertices and chemical graphs up to 22 vertices. Using a supercomputer, a program to calculate the inverse of a matrix and the program \texttt{nauty} for quickly verifying whether two graphs are isomorphic, we determined all pairwise non-isomorphic conduction-isomorphic graphs within these two classes. We also made these graphs available at the House of Graphs~\cite{CDG23} where they can be obtained by searching for the term “conduction-isomorphic”. In view of Theorem~\ref{th:canonicalDoubleCover}, we also determined which of them are non-bipartite graphs. In total, these computations took around 6 CPU-months. In Tables~\ref{tab:countsConnected} and~\ref{tab:countsChemical}, we summarize the counts of the graphs that we considered\footnote{We remark that rounding errors are possible when computing the inverse of a matrix 
in real arithmetic
and therefore the correctness of the counts is also subject to these rounding errors. However, since all considered graphs are small, we believe that the rounding errors are never large, 
and that
therefore the counts are correct. To check this intuition,
we considered numbers whose absolute value is at most $\epsilon$ to be equal to 0; for each graph, we 
performed three independent runs using $\epsilon=10^{-7}$, $\epsilon=10^{-9}$ and $\epsilon=10^{-11}$; each run gave the same result in terms of graph counts. Difficult cases could be triaged and flagged for re-investigation with  
higher precision, but so far this has not proved necessary.}.

\begin{table}[htbp]
    \centering
    \footnotesize
    \begin{tabular}{|c| c | c | c |}
        \hline
        $n$ & \# connected & \thead{\# connected\\conduction-isomorphic} & \thead{\# connected\\conduction-isomorphic\\non-bipartite}\\
        \hline
        1 & 1 & 0 & 0\\
        2 & 1 & 1 & 0\\
        3 & 2 & 0 & 0\\
        4 & 6 & 1 & 0\\
        5 & 21 & 0 & 0\\
        6 & 112 & 4 & 2\\
        7 & 853 & 0 & 0\\
        8 & 11 117 & 33 & 24\\
        9 & 261 080 & 0 & 0\\
        10 & 11 716 571 & 358 & 332\\
        11 & 1 006 700 565 & 0 & 0\\
        \hline
    \end{tabular}
    \vskip12pt
    \caption{An overview of the number of pairwise non-isomorphic connected graphs, connected conduction-isomorphic graphs and connected conduction-isomorphic non-bipartite graphs on $n$ vertices.}
    \label{tab:countsConnected}
\end{table}

\begin{table}[H]
        \centering
\begin{tabular}{|c| c | c | c |}
        \hline
        $n$ & \# chemical & \thead{\# chemical\\conduction-isomorphic} & \thead{\# chemical\\conduction-isomorphic\\non-bipartite}\\
        \hline
        1 & 1 & 0 & 0\\
        2 & 1 & 1 & 0\\
        3 & 2 & 0 & 0\\
        4 & 6 & 1 & 0\\
        5 & 10 & 0 & 0\\
        6 & 29 & 3 & 1\\
        7 & 64 & 0 & 0\\
        8 & 194 & 5 & 0\\
        9 & 531 & 0 & 0\\
        10 & 1733 & 3 & 1\\
        11 & 5524 & 0 & 0\\
        12 & 19 430 & 4 & 0\\
        13 & 69 322 & 0 & 0\\
        14 & 262 044 & 2 & 1\\
        15 & 1 016 740 & 0 & 0\\
        16 & 4 101 318 & 4 & 0\\
        17 & 16 996 157 & 0 & 0\\
        18 & 72 556 640 & 2 & 1\\
        19 & 317 558 689 & 0 & 0\\
        20 & 1 424 644 848 & 4 & 0\\
        21 & 6 536 588 420 & 0 & 0\\
        22 & 30 647 561 117 & 2 & 1\\
        \hline
    \end{tabular}
     \vskip12pt
    \caption{An overview of the number of pairwise non-isomorphic chemical graphs, chemical conduction-isomorphic graphs and chemical conduction-isomorphic non-bipartite graphs on $n$ vertices.}
    \label{tab:countsChemical}
\end{table}

These counts suggest that only a small fraction of all connected graphs on a fixed number of vertices $n$ are conduction-isomorphic and that an even smaller fraction of such graphs are also chemical graphs. By inspecting these graphs more carefully, we also discovered another infinite family of chemical conduction-isomorphic graphs. This new family of graphs is more cumbersome to describe than the families we considered before and we refer the interested reader to the Appendix for their description and a proof that they are indeed conduction-isomorphic. Interestingly, there are only 3 chemical graphs on at most 22 vertices which do not belong to any of the infinite families of graphs that are described in the current paper (shown in Fig.~\ref{fig:exceptions}).

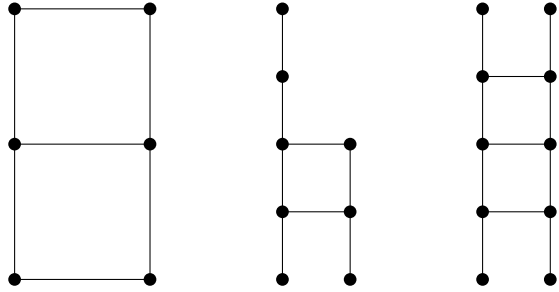
\begin{figure}[h!]
\begin{center}
\begin{tikzpicture}[scale=1.2]
  \def\inter{1.5}
  \def\k{3}

    \foreach \i in {1,...,\k}
    {
        \draw ({\inter * 1}, {\inter * (\i)}) -- ({\inter * 2}, {\inter * (\i)});
    }
    \draw ({\inter * 1}, {\inter * (1)}) -- ({\inter * 1}, {\inter * (2)});
    \draw ({\inter * 1}, {\inter * (2)}) -- ({\inter * 1}, {\inter * (3)});
    \draw ({\inter * 2}, {\inter * (1)}) -- ({\inter * 2}, {\inter * (2)});
    \draw ({\inter * 2}, {\inter * (2)}) -- ({\inter * 2}, {\inter * (3)});
    
      \foreach \i in {1,...,\k}
    {
        \fill (\inter * 1,\inter * \i) circle (2pt);
    }

    \foreach \i in {1,...,\k}
    {
        \fill(\inter * 2,\inter * \i) circle (2pt);
    }
  
\end{tikzpicture} \quad\quad\quad
\hspace{0.4cm}
\begin{tikzpicture}[scale=1.2]
  \def\inter{0.75}
  \def\k{5}

    \draw ({\inter * 1}, {\inter * (1)}) -- ({\inter * 1}, {\inter * (2)});
    \draw ({\inter * 1}, {\inter * (2)}) -- ({\inter * 1}, {\inter * (3)});
    \draw ({\inter * 1}, {\inter * (3)}) -- ({\inter * 1}, {\inter * (4)});
    \draw ({\inter * 1}, {\inter * (4)}) -- ({\inter * 1}, {\inter * (5)});
    
    \draw ({\inter * 2}, {\inter * (1)}) -- ({\inter * 2}, {\inter * (2)});
    \draw ({\inter * 2}, {\inter * (2)}) -- ({\inter * 2}, {\inter * (3)});

    \draw ({\inter * 1}, {\inter * (2)}) -- ({\inter * 2}, {\inter * (2)});
    \draw ({\inter * 1}, {\inter * (3)}) -- ({\inter * 2}, {\inter * (3)});
    
      \foreach \i in {1,...,\k}
    {
        \fill (\inter * 1,\inter * \i) circle (2pt);
    }

    \foreach \i in {1,...,3}
    {
        \fill(\inter * 2,\inter * \i) circle (2pt);
    }
  
\end{tikzpicture} \quad\quad\quad
\hspace{0.4cm}
\begin{tikzpicture}[scale=1.2]
  \def\inter{0.75}
  \def\k{5}

    \draw ({\inter * 1}, {\inter * (1)}) -- ({\inter * 1}, {\inter * (2)});
    \draw ({\inter * 1}, {\inter * (2)}) -- ({\inter * 1}, {\inter * (3)});
    \draw ({\inter * 1}, {\inter * (3)}) -- ({\inter * 1}, {\inter * (4)});
    \draw ({\inter * 1}, {\inter * (4)}) -- ({\inter * 1}, {\inter * (5)});
    
    \draw ({\inter * 2}, {\inter * (1)}) -- ({\inter * 2}, {\inter * (2)});
    \draw ({\inter * 2}, {\inter * (2)}) -- ({\inter * 2}, {\inter * (3)});
    \draw ({\inter * 2}, {\inter * (3)}) -- ({\inter * 2}, {\inter * (4)});
    \draw ({\inter * 2}, {\inter * (4)}) -- ({\inter * 2}, {\inter * (5)});
    
    \draw ({\inter * 1}, {\inter * (2)}) -- ({\inter * 2}, {\inter * (2)});
    \draw ({\inter * 1}, {\inter * (3)}) -- ({\inter * 2}, {\inter * (3)});
    \draw ({\inter * 1}, {\inter * (4)}) -- ({\inter * 2}, {\inter * (4)});
    
      \foreach \i in {1,...,\k}
    {
        \fill (\inter * 1,\inter * \i) circle (2pt);
    }

    \foreach \i in {1,...,\k}
    {
        \fill(\inter * 2,\inter * \i) circle (2pt);
    }
  
\end{tikzpicture}
\end{center}
\caption{Three chemical conduction-isomorphic graphs.}\label{fig:exceptions}
\end{figure}

Moreover, a clear pattern in the counts seems to emerge starting from $n=11$ vertices (there are 0, 4, 0, and 2 chemical conduction-isomorphic graphs on, respectively, $4k-1$, $4k$, $4k+1$ and $4k+2$ vertices for $k \geq 3$). These results seem to suggest that perhaps the infinite families described in the current paper cover (nearly) all cases of chemical conduction-isomorphic graphs. Based on this, we ask the following question:
\begin{question}
Are there only a finite number of chemical conduction-isomorphic graphs which do not belong to one of the infinite families described in Theorems~\ref{th:coronaGraphs},~\ref{th:minDegTwo} and~\ref{th:fourthInfFam}?
\end{question}

\section{Conclusion}
\label{sec:conclusion}
The conduction graph, $G\sp{\mathrm C}$, has been defined here as a compact way of summarising at a glance the Fermi-level conduction behaviour of all possible devices derived by connection of a simple graph $G$ to two leads. 

The graph $G\sp{\mathrm C}$ for given $G$ is derived from the conduction matrix for all devices (here calculated within the SSP model~\cite{E11,GEZ07}) by a ‘booleanisation’ process.   Edges and loops in $G\sp{\mathrm C}$ denote conducting distinct and ipso devices, respectively. 

Properties of $G\sp{\mathrm C}$ are shown to link to previous work 
by the Sheffield group and collaborators within the SSP model in which selection rules  
and classification schemes for conduction classes~\cite{FBPS20,FPTBS14,FSBSP17} were derived. 
In particular, the case where 
$G\sp{\mathrm C}$ is isomorphic to the simple graph $G$ is completely described here: $G$ is non-singular (Corollary~\ref{cor:condIsoNullity0}), and is an ipso omni-insulator (Observation~\ref{obs:condIsoIsIpsoOmniInsulator}).

Constructions of infinite families of conduction-isomorphic graphs, some of which are chemical, have been found.

Investigation of conduction graphs derived from singular graphs  gives a characterisation of the 
well studied class of nut graphs as those graphs of nullity one for which 
$G\sp{\mathrm C}$ is connected. 

Generalisations to graphs of higher nullity, and in particular to the core graphs with nullity greater than one,
could be interesting, and 
questions are scattered throughout the text in the hope of stimulating 
further work.

\section*{Acknowledgements}
\noindent The computational resources and services used in this work were provided by the VSC (Flemish Supercomputer Centre), funded by the Research Foundation Flanders (FWO) and the Flemish Government - Department EWI. The research of Jan Goedgebeur was supported by Internal Funds of KU Leuven and an FWO grant with grant number G0AGX24N. Jorik Jooken is supported by a Postdoctoral Fellowship of the Research Foundation Flanders (FWO) with grant number 1222524N. Patrick Fowler thanks the Francqui Foundation
for the award of an International Francqui Professor Chair, which supported an extended research visit to Belgium in 2024. The authors are also grateful to Stijn Cambie for suggestions that helped to improve the presentation of this manuscript.

\newpage
\section*{Appendix}\label{sec: appendix}

\appendix
\section*{Another family of chemical conduction-isomorphic graphs}
\begin{theorem}
\label{th:fourthInfFam}
For all integers $k \geq 3$, there exists a chemical conduction-isomorphic graph with minimum degree one on $4k-4$ vertices, different from the graphs described in Theorem~\ref{th:coronaGraphs}.
\end{theorem}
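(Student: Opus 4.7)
The plan is to follow the template established by Theorems~\ref{th:minDegTwo} and~\ref{th:largeMinDegree}: exhibit an explicit family $\{G_k\}_{k\geq 3}$ of chemical graphs on $4k-4$ vertices whose adjacency matrix can be written in block form using cyclic permutation-like matrices, compute $A(G_k)^{-1}$ by blockwise algebra, and identify the booleanisation of the inverse with $A(G_k)$ up to a relabelling. Since $G_k$ must have minimum degree one but fail to be a corona graph, the block structure should partition the $4k-4$ vertices into classes of unequal roles rather than the symmetric backbone-plus-pendant split that characterises coronas. A natural candidate is an adjacency matrix of the shape
\[
\renewcommand{\arraystretch}{1.5}
\left[
\begin{array}{c|c}
f(2k-2,1)+f(2k-2,-1) & M \\ \hline
M^{T} & N
\end{array}
\right],
\]
where $f$ is the matrix from the proof of Theorem~\ref{th:minDegTwo}, $M$ is an identity-like block modified by a single shifted or deleted entry so that at least one vertex becomes a leaf, and $N$ is a very sparse symmetric block (possibly zero, or a single short cyclic permutation) chosen to keep the maximum degree at three and the full matrix non-singular.

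With such a block form fixed, the second step is a blockwise inversion. Using $P(n)P^{-1}(n)=I_n$ together with the multiplicative identities
\[
f(2k-2,a)\,f(2k-2,b)=\begin{cases} f(2k-2,a-b) & \textrm{ if }a\textrm{ is odd},\\ f(2k-2,a+b) & \textrm{ if }a\textrm{ is even}, \end{cases}
\]
each block of $A(G_k)^{-1}$ reduces to a small sum of $f$-matrices whose $0$--$1$ pattern coincides with that of the corresponding block of $A(G_k)$ up to row and column shifts. Theorem~\ref{th:booleanise} then turns the inverse into $A(G_k^{\mathrm C})$, and the isomorphism $h\colon V(G_k)\to V(G_k^{\mathrm C})$ is the permutation that conjugates one into the other; as in the earlier proofs I expect $h$ to take a piecewise-cyclic form depending on the parity and the range of the vertex index.

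Finally, distinctness from Theorem~\ref{th:coronaGraphs} must be verified. Every corona graph on $4k-4$ vertices has exactly $2k-2$ leaves forming a perfect matching with the remaining $2k-2$ non-leaves, so it suffices to check that $G_k$ has a different number of leaves, or that its leaves do not saturate the non-leaves via a matching; either property is visible directly from the chosen block structure. The genuine difficulty lies in the first step: unlike the minimum-degree-two and large-minimum-degree families, whose constructions are nearly forced by symmetry, the new family is described in the text as ``more cumbersome'', and the small exceptional chemical conduction-isomorphic graphs in Fig.~\ref{fig:exceptions} hint at a ladder-like scaffold with a few rungs removed or reattached so as to produce one or two leaves. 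Translating that visual pattern into an algebraically tractable block matrix whose booleanised inverse is a permutation of itself \emph{uniformly in $k$} is the main obstacle; once the correct matrix is in hand, the remaining verification is routine if tedious algebra.
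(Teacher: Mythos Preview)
Your proposal is a plan, not a proof: the construction itself is never specified, and the construction is the entire content of the theorem. You correctly identify the template (block adjacency matrix built from the $f(n,a)$ matrices, blockwise inversion, booleanise, write down a piecewise permutation), but you stop precisely at the point you yourself flag as ``the main obstacle'', namely producing an explicit family $G_k$ whose booleanised inverse is a relabelling of itself. Everything after that in your write-up is conditional on having the matrix in hand.

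Moreover, the one concrete guess you do make about the shape of the construction is off. You propose a symmetric split into two $(2k-2)\times(2k-2)$ blocks with an identity-like off-diagonal block tweaked in one entry. The paper's family uses an \emph{asymmetric} split $4k-4=2k+(2k-4)$: the top-left block is $f(2k,1)+f(2k,-1)$ with two corner entries deleted (so it is a path $P_{2k}$, not a cycle), the bottom-right block is $f(2k-4,1)$, and the off-diagonal block is a genuinely rectangular $2k\times(2k-4)$ matrix carrying an $I_{2k-4}$ in its interior rows with three zero rows on top and one on the bottom. This asymmetry is what forces minimum degree one while keeping the graph non-corona (there are only two leaves, not $2k-2$). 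The inverse then involves not just $f$-matrices but also several isolated $E^{n_1,n_2}_{a,b}$ correction terms, and the isomorphism $h$ is a four-case piecewise map. None of this can be recovered from the generic ansatz you wrote down; the family had to be discovered from the computational census and then verified, which is why the paper calls it ``cumbersome'' and relegates it to the appendix.
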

\begin{proof}
We shall use again the matrix $f(n,a)$ as introduced in Theorem~\ref{th:minDegTwo}, i.e.\ $f(n,a)$ is an $n$ by $n$ matrix in which every entry is equal to 0, except one entry per row which is equal to 1: $f(n,a)_{i,i+a}=1$ if $i$ is even and $f(n,a)_{i,i-a}=1$ if $i$ is odd. For positive integers $n_1$ and $n_2$ and non-negative integers $a$ and $b$, we use $E^{n_1,n_2}_{a,b}$ to denote an $n_1$ by $n_2$ matrix in which every entry is equal to 0, except for the entry on row $a$ and column $b$, which is equal to 1. Let $k \geq 3$ be an integer and let $G$ be a graph on $4k-4$ vertices having the following adjacency matrix:

    \[
\left[
\renewcommand{\arraystretch}{1.5}
\begin{array}{c|c}
f(2k,1)+f(2k,-1)&\\-
E^{2k,2k}_{0,2k-1}& A\\
-E^{2k,2k}_{2k-1,0} & \\ \hline
A^T & f(2k-4,1)
\end{array}
\right].
\]
Here, the matrix $A$ is a matrix with $2k$ rows and $2k-4$ columns in which the first three rows and the last row are filled with zeroes and the other $2k-4$ rows from top to bottom are given by the identity matrix $I_{2k-4}$. By blockwise matrix multiplication, we can verify that the inverse matrix is given by:

    \[
\left[
\renewcommand{\arraystretch}{1.5}
\begin{array}{c|c}
f(2k,1)&\\
-E^{2k,2k}_{0,3}& E^{2k,2k-4}_{0,1}-B-C \\
-E^{2k,2k}_{3,0}&\\ \hline
 & f(2k-4,1)+f(2k-4,3)\\
(E^{2k,2k-4}_{0,1}-B-C)^T & -E^{2k-4,2k-4}_{1,2k-5}\\
& -E^{2k-4,2k-4}_{2k-5,1}
\end{array}
\right].
\]
Here, $B$ is a matrix consisting of $2k$ rows and $2k-4$ columns filled with zeroes except $B_{2,1}=B_{4,3}=...=B_{2k-4,2k-5}=1$. Likewise, $C$ is a matrix consisting of $2k$ rows and $2k-4$ columns filled with zeroes except $C_{5,0}=C_{7,2}=...=C_{2k-1,2k-6}=1$. Because of Theorem~\ref{th:booleanise}, booleanising the above inverse matrix yields the adjacency matrix of the conduction graph $G\sp{\mathrm C}$ of $G$. We give an example of the graphs obtained by choosing $k=4$ in Fig.~\ref{fig:fourthInfFam}.

\begin{figure}[h!]
\begin{center}
\begin{tikzpicture}[scale=1.7]
  \def\inter{0.45}
  \def\k{8}

  \pgfmathsetmacro{\kMinusOne}{\k-1}

    \fill[white] (\inter * 3,\inter * 2.9) circle (2pt);
    \fill[white] (\inter * 3,\inter * 0.15) circle (2pt);
    
    \foreach \i in {2,...,\k}
    {
        \draw ({\inter * (\i-1)}, \inter * 1) -- ({\inter * (\i)}, \inter * 1);
    }

    \foreach \i in {4,...,\kMinusOne}
    {
        \draw ({\inter * (\i)}, \inter * 1) -- ({\inter * (\i)}, \inter * 2);
    }

    \foreach \i in {4,6,...,\kMinusOne}
    {
        \draw ({\inter * (\i+1)}, \inter * 2) -- ({\inter * (\i)}, \inter * 2);
    }

      \foreach \i in {1,...,\k}
    {
        \fill (\inter * \i,\inter * 1) circle (2pt);
    }
    
    \foreach \i in {4,...,\kMinusOne}
    {
        \fill[blue] (\inter * \i,\inter * 2) circle (2pt);
    }
  
\end{tikzpicture} \quad
\vspace{0.3cm}
\begin{tikzpicture}[scale=1.7]
  \def\inter{0.45}
  \def\k{8}

  \pgfmathsetmacro{\kMinusOne}{\k-1}
  \pgfmathsetmacro{\kMinusTwo}{\k-2}

    \foreach \i in {2,4,...,\k}
    {
        \draw ({\inter * (\i-1)}, \inter * 1) -- ({\inter * (\i)}, \inter * 1);
    }

     \draw (\inter * 4,\inter * 2) .. controls (\inter * 5.5,\inter * 3) and (\inter * 5.5,\inter * 3) .. (\inter * 7,\inter * 2);
     \draw (\inter * 1,\inter * 1) .. controls (\inter * 2.5,\inter * 0) and (\inter * 2.5,\inter * 0) .. (\inter * 4,\inter * 1);
     \draw (\inter * 1,\inter * 1) .. controls (\inter * 2.5,\inter * 2.7) and (\inter * 2.5,\inter * 2.7) .. (\inter * 5,\inter * 2);

    \draw ({\inter * (3)}, \inter * 1) -- ({\inter * (5)}, \inter * 2);
     \draw ({\inter * (6)}, \inter * 1) -- ({\inter * (4)}, \inter * 2);
     \draw ({\inter * (5)}, \inter * 1) -- ({\inter * (7)}, \inter * 2);
     \draw ({\inter * (8)}, \inter * 1) -- ({\inter * (6)}, \inter * 2);
     
    \foreach \i in {4,6,...,\kMinusOne}
    {
        \draw ({\inter * (\i+1)}, \inter * 2) -- ({\inter * (\i)}, \inter * 2);
    }
    
    \foreach \i in {1,...,\k}
    {
        \fill (\inter * \i,\inter * 1) circle (2pt);
    }

        \foreach \i in {3,...,6}
    {
        \fill[blue] (\inter * \i,\inter * 1) circle (2pt);
    }
    
    \foreach \i in {4,...,\kMinusOne}
    {
        \fill (\inter * \i,\inter * 2) circle (2pt);
    }
\end{tikzpicture}
\end{center}
\caption{The graph $G$ (left) and its conduction graph $G\sp{\mathrm C}$ (right) obtained by choosing $k=4$ in Theorem~\ref{th:fourthInfFam}.}\label{fig:fourthInfFam}
\end{figure}
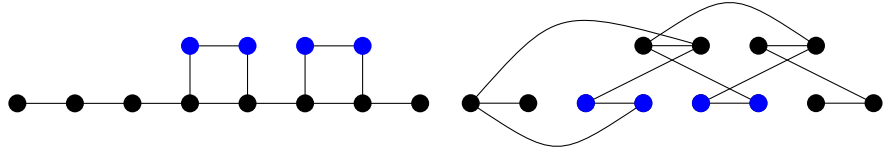

Finally, we give the isomorphism $h : V(G) \rightarrow V(G\sp{\mathrm C})$ between $G$ and $G\sp{\mathrm C}$ that is necessary to be able to conclude that $G$ is indeed conduction-isomorphic:
\begin{align*}
    h(u)=\begin{cases}
        2k-2+u& \textrm{for } u \in \{0,1\}, \\
        2k-2-u& \textrm{for } u \in \{2k-2,2k-1\}, \\
        4k-4-u& \textrm{for } u \notin \{0,2k-2\}\text{ and }u\textrm{ even}, \\
        4k-2-u& \textrm{for } u \notin \{1,2k-1\}\text{ and }u\textrm{ odd}. \\
    \end{cases}
\end{align*}

\end{proof}


\begin{thebibliography}{10}
 
        \bibitem{BFP24}
       N.~Bašić, P.~W.~Fowler and T.~Pisanski.
       \newblock Vertex and edge orbits in nut graphs.
       \newblock {\em The Electronic Journal of Combinatorics}, 31:2(P2.38), 2024.
       
       \bibitem{BHM80}
       R.~A.~Brualdi, F.~Harary and Z.~Miller.
       \newblock Bigraphs versus digraphs via matrices.
       \newblock {\em J. Graph Theory}, 4 (1):51--73, 1980.    

        \bibitem{C93}
       A.~L.~Cauchy.
       \newblock Oeuvres Complètes d’Augustin Cauchy.
       \newblock {\em Gauthier-Villars}, 1 (8), 1893.
       
        \bibitem{CDG23}
       K.~Coolsaet, S.~D’hondt, and J.~Goedgebeur.
       \newblock House of Graphs 2.0: A database of interesting graphs and more.
       \newblock {\em Discrete Appl. Math.}, 325:97--107, 2023. Available at \url{https://houseofgraphs.org/}.

        \bibitem{CFG18}
        K.~Coolsaet, P.~W.~Fowler and J.~Goedgebeur.
        \newblock Generation and properties of nut graphs.
        \newblock{\em MATCH Commun. Math. Comput. Chem.}, 80(2):423--444, 2018.

      \bibitem{CR40}
       C.~A.~Coulson and G.~S.~Rushbrooke.
       \newblock Note on the method of molecular orbitals.
       \newblock {\em Proc. Camb. Phil. Soc.}, 36:193--200, 1940.
 
       \bibitem{E11}
       M.~Ernzerhof.
       \newblock Simple orbital theory for the molecular electrician.
       \newblock {\em J. Chem. Phys.}, 135 (014104), 2011.

       \bibitem{FBPS20}
       P.~W.~Fowler, M.~Borg, B.~T.~Pickup and I.~Sciriha.
       \newblock Molecular graphs and molecular conduction: the d-omni-conductors. 
       \newblock {\em Phys. Chem. Chem. Phys.}, 22(3):1349--1358, 2020.
       
       \bibitem{FPB21}
       P.~W.~Fowler, T.~Pisanski and N.~Bašić.
       \newblock Charting the space of chemical nut graphs.
       \newblock {\em MATCH Commun. Math. Comput. Chem.}, 86(3):519--538, 2021.
       
       \bibitem{FPTBS14}
       P.~W.~Fowler, B.~T.~Pickup, T.~Z.~Todorova, M.~Borg and I.~Sciriha.
       \newblock Omni-conducting and omni-insulating molecules.
       \newblock {\em J. Chem. Phys.}, 140 (054115), 2014.

        \bibitem{FPTM09}
       P.~W.~Fowler, B.~T.~Pickup, T.~Z.~Todorova, W.~Myrvold.
       \newblock A selection rule for molecular conduction.
       \newblock {\em J. Chem. Phys.}, 131 (044104), 2009.

        \bibitem{FSBSP17}
        P.~W.~Fowler, I.~Sciriha, M.~Borg, V.~E.~Seville and B.~T.~Pickup.
        \newblock Near omni-conductors and insulators: Alternant hydrocarbons in the SSP model of ballistic conduction. \newblock {\em J. Chem. Phys.}, 147(16), 2017.
        
      \bibitem{GPS23}
       J.~B.~Gauci, T.~Pisanski and I.~Sciriha.
       \newblock Existence of regular nut graphs and the Fowler construction.
       \newblock {\em Appl. Anal. Discrete Math.}, 2023.
       
        \bibitem{GEZ07}
       F.~Goyer, M.~Ernzerhof and M.~Zhuang.
       \newblock Source and sink potentials for the description of open systems with a stationary current passing through.
       \newblock {\em J. Chem. Phys.}, 126 (144104), 2007.
       
        \bibitem{HR20}
       D.~Holt and G.~Royle.
       \newblock A census of small transitive groups and vertex-transitive graphs.
       \newblock {\em J. Symbolic
Comput.}, 101:51--60, 2020.


        \bibitem{HJ13}
       R.~A.~Horn and C.~R.~Johnson.
       \newblock Matrix Analysis.
       \newblock {\em Cambridge University
Press, Second Edition}, 2013.

        \bibitem{H04}
       S.-G.~Hwang.
       \newblock Cauchy's interlace theorem for eigenvalues of Hermitian matrices.
       \newblock {\em The American mathematical monthly}, 111.2:157--159, 2004.

      \bibitem{MP14}
       B.~D.~McKay and A.~Piperno.
       \newblock Practical graph isomorphism, II.
       \newblock {\em J. Symb. Comput.}, 60:94--112, 2014. 

    \bibitem{S08}
       I.~Sciriha.
       \newblock Coalesced and embedded nut graphs in singular graphs.
       \newblock {\em Ars Math. Contemp.}, 1(1):20--31, 2008. 

       \bibitem{PF08}
       B.~T.~Pickup and P.~W.~Fowler.
       \newblock An analytic model for steady-state currents in conjugated systems.
       \newblock {\em Chem. Phys. Lett.}, 459:198--202, 2008.
       
    \bibitem{SG98}
       I.~Sciriha and I.~Gutman.
       \newblock Nut graphs: Maximallly extending cores.
       \newblock {\em Util. Math.}, 54:257--272, 1998. 
       

    \bibitem{S61}
    A.~Streitwieser.
    \newblock Molecular Orbital Theory for Organic Chemists.
    \newblock {\em Wiley International Edition}, 1961.

   \bibitem{S51}
   J.~J.~Sylvester.
   \newblock XXXVII. On the relation between the minor determinants of linearly equivalent quadratic functions. 
   \newblock {\em The London, Edinburgh, and Dublin Philosophical Magazine and Journal of Science} 1(4):295--305, 1851.

	\end{thebibliography}
\end{document}